\numberwithin{equation}{section}
\numberwithin{figure}{section}
\theoremstyle{plain}
\newtheorem{thm}{Theorem}[section]
  \theoremstyle{definition}
  \newtheorem{defn}[thm]{Definition}
  \theoremstyle{plain}
  \newtheorem{cor}[thm]{Corollary}
  \theoremstyle{plain}
  \newtheorem{prop}[thm]{Proposition}
  \theoremstyle{plain}
  \newtheorem{lem}[thm]{Lemma}
  \theoremstyle{remark}
  \newtheorem{rem}[thm]{Remark}
\newcommand{\cAq}{ {\mathcal B}^{(q)} } 
\newcommand{\cB}{ {\mathcal B} } 
\newcommand{\cBq}{ {\mathcal B}^{(q)} } 
\newcommand{\cC}{ {\mathcal C} } 
\newcommand{\cCq}{ {\mathcal C}^{(q)} } 
\newcommand{\cE}{ {\mathcal E} } 
\newcommand{\cEq}{ {\mathcal E}^{(q)} } 
\newcommand{\cF}{ {\mathcal F} } 
\newcommand{\cFq}{ {\mathcal F}^{(q)} } 
\newcommand{\cH}{ {\mathcal H} } 
\newcommand{\cS}{ {\mathcal S} } 
\newcommand{\cSq}{ {\mathcal S}^{(q)} }
\newcommand{\Aq}{ A^{(q)} } 
\newcommand{\Jq}{ J^{(q)} } 
\newcommand{\Lq}{ L^{(q)} } 
\newcommand{\Mq}{ M^{(q)} } 
\newcommand{\Mqplus}{ M_{+}^{(q)} } 
\newcommand{\Pq}{ P^{(q)} } 
\newcommand{\Rq}{ R^{(q)} }
\newcommand{\Uopp}{ U_{op} } 
\newcommand{\bC}{ {\mathbb C} } 
\newcommand{\inv}{ {\mathrm{inv}} }
\begin{document}
\global\long\def\cAq{\mathcal{A}^{(q)}}

\global\long\def\cB{\mathcal{B}}

\global\long\def\cBq{\mathcal{B}^{(q)}}

\global\long\def\cC{\mathcal{C}}

\global\long\def\cCq{\mathcal{C}^{(q)}}

\global\long\def\cE{\mathcal{E}}

\global\long\def\cEq{\mathcal{E}^{(q)}}

\global\long\def\cF{\mathcal{F}}

\global\long\def\cFq{\mathcal{F}^{(q)}}

\global\long\def\cH{\mathcal{H}}

\global\long\def\cS{\mathcal{S}}

\global\long\def\cSq{\mathcal{S}^{(q)}}

\global\long\def\Aq{A^{(q)}}

\global\long\def\Jq{J^{(q)}}

\global\long\def\Lq{L^{(q)}}

\global\long\def\Mq{M^{(q)}}

\global\long\def\Mqplus{\widehat{M}^{(q)}}

\global\long\def\Pq{P^{(q)}}

\global\long\def\Rq{R^{(q)}}

\global\long\def\Uopp{U_{opp}}

\global\long\def\bC{\mathbb{C}}

\global\long\def\inv{\mathrm{inv}}

\global\long\def\cM{\mathcal{M}}

\global\long\def\cMq{\mathcal{M}^{(q)}}

\title[Exactness of the $q$-commutation relations]{Exactness of the Fock space representation \\
 of the $q$-commutation relations}

\author{Matthew Kennedy}

\address{Matthew Kennedy \\
 Pure Mathematics Department \\
 University of Waterloo \\
 Waterloo, Ontario, Canada N2L 3G1}

\email{m3kennedy@uwaterloo.ca}

\author{Alexandru Nica}

\address{Alexandru Nica \\
 Pure Mathematics Department \\
 University of Waterloo \\
 Waterloo, Ontario, Canada N2L 3G1}

\email{anica@math.uwaterloo.ca}

\begin{abstract}
We show that for all $q$ in the interval $(-1,1)$, the Fock representation
of the $q$-commutation relations can be unitarily embedded into the
Fock representation of the extended Cuntz algebra. In particular,
this implies that the $\mathrm{C}^{*}$-algebra generated by the Fock
representation of the $q$-commutation relations is exact. An immediate
consequence is that the $q$-Gaussian von Neumann algebra is weakly
exact for all $q$ in the interval $(-1,1)$. 
\end{abstract}

\thanks{Research supported by a CGS Scholarship from NSERC Canada (M. Kennedy) and by a Discovery Grant from NSERC Canada (A. Nica)}

\subjclass[2000]{Primary 46L05; Secondary 46L10, 46L54}

\maketitle

\section{Introduction}

The $q$-commutation relations provide a $q$-analogue of the bosonic
($q=1$) and the fermionic ($q=-1$) commutation relations from quantum
mechanics. These relations have a natural representation on a deformed
Fock space which was introduced by Bozejko and Speicher in \cite{BS1990},
and was subsequently studied by a number of authors (see e.g. \cite{BKS1997},
\cite{DN1993}, \cite{JSW1993}, \cite{N2004}, \cite{R2005}, \cite{S2004}).

For the entirety of this paper, we fix an integer $d\geq2$. Consider
the usual full Fock space $\cF$ over $\bC^{d}$, \begin{equation}
\mathcal{F}=\oplus_{n=0}^{\infty}\mathcal{F}_{n}\quad\mbox{ (orthogonal direct sum),}\label{eqn:non-deformed-fock}\end{equation}
 where $\mathcal{F}_{0}=\bC\Omega$ and $\mathcal{F}_{n}=(\mathbb{C}^{d})^{\otimes n}$
for $n\geq1$.

Corresponding to the vectors in the standard orthonormal basis of
$\mathbb{C}^{d}$, one has left creation operators $L_{1},...,L_{d}\in B(\cF)$.
Define the $\mathrm{C}^{*}$-algebra $\cC$ by \begin{equation}
\cC:=C^{*}(L_{1},\ldots,L_{d})\subseteq B(\cF).\label{eqn:non-deformed-cuntz}\end{equation}
 It is well known that $\cC$ is isomorphic to the extended Cuntz
algebra. (Although it is customary to denote the extended Cuntz algebra
by $\cE$, we use $\cC$ here to emphasize that we are working with
a concrete $C^{*}$-algebra of operators.)

Now let $q\in(-1,1)$ be a deformation parameter. We consider the
$q$-deformation $\cFq$ of $\cF$ as defined in \cite{BS1990}. Thus
\begin{equation}
\cFq=\oplus_{n=0}^{\infty}\cFq_{n}\quad\mbox{ (orthogonal direct sum),}\label{eqn:deformed-fock}\end{equation}
 where every $\cFq_{n}$ is obtained by placing a certain deformed
inner product on $(\mathbb{C}^{d})^{\otimes n}$. (The precise definition
will be reviewed in Subsection \ref{sub:q-fock-framework} below.)
For $q=0$, one obtains the usual non-deformed Fock space $\cF$ from
above.

In this deformed setting, one also has natural left creation operators
$\Lq_{1},...,\Lq_{d}\in B(\cFq)$, which satisfy the $q$-commutation
relations\[
\Lq_{i}(\Lq_{j})^{*}=\delta_{ij}I+q(\Lq_{j})^{*}\Lq_{i},\quad1\leq i,j\leq d.\]
 Define the $\mathrm{C}^{*}$-algebra $\cCq$ by \begin{equation}
\cCq:=C^{*}(\Lq_{1},\ldots,\Lq_{d})\subseteq B(\cFq).\label{eqn:deformed-cuntz}\end{equation}
 For $q=0$, this construction yields the extended Cuntz algebra $\cC$
from above.

It is widely believed that the algebra $\cC$ and the deformed algebra
$\cCq$ are actually unitarily equivalent. In fact, this is known
for sufficiently small $q$. In \cite{DN1993}, a unitary $U:\cFq\to\cF$
was constructed which embeds $\cC$ into $\cCq$ for all $q\in(-1,1)$,
i.e. $\cC\subseteq U\cCq U^{*}$, and it was shown that for $|q|<0.44$
this embedding is actually surjective, i.e. $\cC=U\cCq U^{*}$.

The main purpose of the present paper is to show that it is possible
to unitarily embed $\cCq$ into $\cC$ for all $q\in(-1,1)$. Specifically,
we construct a unitary operator $\Uopp:\cFq\to\cF$ such that $\Uopp\cCq\Uopp^{*}\subseteq\cC.$
The unitary $\Uopp$ is closely related to the unitary $U$ from \cite{DN1993},
as we will now see. 
\begin{defn}
\label{def:conjugation-operator} Let $J:\cF\to\cF$ be the unitary
conjugation operator which reverses the order of the components in
a tensor in $(\bC^{d})^{\otimes n}$, i.e. \begin{equation}
J(\eta_{1}\otimes\cdots\otimes\eta_{n})=\eta_{n}\otimes\cdots\otimes\eta_{1},\quad\forall\eta_{1},\ldots,\eta_{n}\in\bC^{d}.\label{eqn:1.11}\end{equation}
 Note that for $n=0$, Equation (\ref{eqn:1.11}) says that $J(\Omega)=\Omega$.

Let $\Jq:\cFq\to\cFq$ be the operator which acts as in Equation (\ref{eqn:1.11}),
where the tensor is now viewed as an element of the space $\cFq_{n}$.
It is known that $\Jq$ is also unitary operator (see the review in
Subsection \ref{sub:q-fock-framework}).
\end{defn}

\begin{defn}
\label{def:Uopp}Let $q\in(-1,1)$ be a deformation parameter and
let $U:\cFq\to\cF$ be the unitary defined in \cite{DN1993}. Define
a new unitary $\Uopp:\cFq\to\cF$ by \[
\Uopp=JU\Jq.\]

\end{defn}
The following theorem is the main result of this paper. 
\begin{thm}
\label{thm:main-inclusion} For every $q\in(-1,1)$ the unitary $\Uopp$
from Definition \ref{def:Uopp} satisfies\[
\Uopp\cCq\Uopp^{*}\subseteq\cC.\]
 
\end{thm}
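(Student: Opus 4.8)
The plan is to peel off the two conjugation unitaries and reduce the inclusion to a right-handed mirror of the embedding $\cC\subseteq U\cCq U^{*}$ of \cite{DN1993}. Because $\Uopp(\,\cdot\,)\Uopp^{*}$ is a $*$-homomorphism and $\cC$ is a $\mathrm{C}^{*}$-algebra, it suffices to prove that each generator $\Uopp\Lq_i\Uopp^{*}$ lies in $\cC$. I would first record that $J$ and $\Jq$ are self-adjoint unitaries (each squares to the identity) and that conjugation by them interchanges left and right creation operators: setting $R_i:=JL_iJ$ and $\Rq_i:=\Jq\Lq_i\Jq$, a check on elementary tensors shows that $R_i$ and $\Rq_i$ are the non-deformed and $q$-deformed right creation operators, appending $e_i$ at the end of a tensor. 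Consequently $J\cC J=C^{*}(R_1,\ldots,R_d)=:\cC_R$ and $\Jq\cCq\Jq=C^{*}(\Rq_1,\ldots,\Rq_d)=:\cCq_R$.

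With this in hand the reduction is a one-line computation. Using $\Uopp=JU\Jq$ and $\Uopp^{*}=\Jq U^{*}J$,
\[
\Uopp\Lq_i\Uopp^{*}=JU\Jq\Lq_i\Jq U^{*}J=JU\Rq_iU^{*}J ,
\]
and since $J\cC_RJ=\cC$, the membership $\Uopp\Lq_i\Uopp^{*}\in\cC$ is equivalent to $U\Rq_iU^{*}\in\cC_R$. Thus the whole theorem is equivalent to the single inclusion $U\cCq_RU^{*}\subseteq\cC_R$, after which $\Uopp\cCq\Uopp^{*}=JU\cCq_RU^{*}J\subseteq J\cC_RJ=\cC$ closes the argument. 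This is precisely the left--right mirror of the DN1993 embedding, with the essential twist that the inclusion now runs the opposite way.

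For the crux I would combine the explicit description of $U$ recalled in Subsection~\ref{sub:q-fock-framework} with the way left and right $q$-creation operators interact. Two relations are relevant. The creation operators commute outright, $\Lq_i\Rq_j=\Rq_j\Lq_i$, since each merely appends a basis vector at the opposite end of a tensor; and a short computation on elementary tensors gives
\[
[\,\Rq_i,(\Lq_j)^{*}\,]=-\,\delta_{ij}\,q^{N} ,
\]
where $N$ denotes the number operator, an operator that is compact because $|q|<1$. Conjugating by $U$, the operator $U\Rq_iU^{*}$ commutes exactly with every $U\Lq_jU^{*}$ and modulo $\mathcal K(\cF)$ with every $U(\Lq_j)^{*}U^{*}$. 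Since by \cite{DN1993} each $L_j$, and hence all of $\cC$, is a norm-limit of polynomials in these operators, it follows that $U\Rq_iU^{*}$ commutes with $\cC$ modulo $\mathcal K(\cF)$; that is, $U\Rq_iU^{*}$ lies in the essential commutant of $\cC$, which (as $\mathcal K(\cF)\subseteq\cC_R$) is the natural ambient algebra for $\cC_R$. To promote this to genuine membership $U\Rq_iU^{*}\in\cC_R$ I would follow DN1993's proof that $L_i\in U\cCq U^{*}$ line by line, but applied to the right operators: expand $U\Rq_iU^{*}$ as a norm-convergent series of words in $R_1,\ldots,R_d$ and their adjoints, and verify that the series converges and stays within $\cC_R$ for every $q\in(-1,1)$.

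The principal obstacle is this final promotion, and with it the reversal of the inclusion relative to \cite{DN1993}. The reversal is genuine rather than formal: $U$ does not intertwine the two conjugations, $U\Jq\neq JU$ (were they equal we would have $\Uopp=U$, and the theorem combined with \cite{DN1993} would force $\cC=U\cCq U^{*}$, that is $\cCq\cong\cC$ for all $q$, which is open). Thus the right-handed statement cannot be deduced from the left-handed one by symmetry; it requires re-examining how $U$ distorts the degree filtration, and the asymmetry responsible --- ultimately the asymmetry of the factorization of the $q$-symmetrization operator underlying $U$ --- is exactly what converts the DN1993 inclusion into its reverse. Controlling the resulting series uniformly for all $q\in(-1,1)$ is where the substantive work lies.
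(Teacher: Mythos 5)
Your opening reduction is the same as the paper's: $\Uopp\Lq_i\Uopp^{*}=JU\Rq_iU^{*}J$, so everything comes down to showing that $JU\Rq_iU^{*}(JU)^{*}$ lies in $\cC$, and you correctly observe that the left--right asymmetry here is genuine (the mirrored statement $U\Lq_iU^{*}\in\cC$ is exactly the open surjectivity question). But the step you label as the crux --- ``promoting'' essential-commutant membership to actual membership in $\cC$ --- is where the proof is missing, and the two mechanisms you offer for it do not work. First, the observation that $U\Rq_iU^{*}$ commutes with $\cC$ modulo compacts is a necessary condition only: the paper points out (citing Davidson's result on Toeplitz operators, \cite{D1977}) that the essential commutant of the right creation operators is strictly larger than $\cC+\mathcal{K}(\cF)$, so no amount of massaging this condition alone can yield $T\in\cC$. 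Second, the plan to ``follow DN1993's proof line by line'' and expand $U\Rq_iU^{*}$ as a norm-convergent series of words in the $R_j$'s is precisely the method that is only known to converge for $|q|<0.44$; you give no reason the series should converge for all $q\in(-1,1)$, and asserting that it does is essentially asserting the theorem.

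What the paper actually supplies, and what your proposal lacks, is a new sufficient criterion for membership in $\cC$ that is strictly stronger than essential commutation: Theorem \ref{thm:inclusion-criterion} shows that a band-limited $T$ with $[T,R_j^{*}]$ in the ideal $\cS$ of \emph{summable} band-limited operators must lie in $\cC$ (proved via the explicit $\cC$-approximants of Definition \ref{def:approximants}, whose successive differences are controlled in norm by the summability hypothesis). The theorem then follows once one verifies $[\Rq_i,U^{*}L_j^{*}U]\in\cSq$ (Proposition \ref{prop:comm-ULUR}); this is where the asymmetry you correctly sensed becomes quantitative: the commutator identity $[(\Lq_j)^{*},\Rq_j]\mid_{\cFq_n}=q^{n}I$ gives the geometric bound $\|[\Mqplus,\Rq_j]\mid_{\cFq_n}\|\leq|q|^{n}\|\Lq_j\|$, and the Pedersen commutator inequality (Proposition \ref{prop:comm-using-pedersen}) converts the square-root summability of these norms into summability of $[(\Mqplus)^{1/2},\Rq_j]$, hence of $[(\Mqplus)^{-1/2},\Rq_j]$ (Lemma \ref{lem:comm-invsqrtMq-Rqj-est}). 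Without some replacement for this chain --- an inclusion criterion sharper than the essential bicommutant, plus a decay estimate that feeds it --- your argument does not close.
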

The following corollary follows immediately from Theorem \ref{thm:main-inclusion}. 
\begin{cor}
\label{cor:Cq-exact} For every $q\in(-1,1)$ the $C^{*}$-algebra
$\cCq$ is exact. 
\end{cor}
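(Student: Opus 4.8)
The plan is to deduce exactness of $\cCq$ from Theorem \ref{thm:main-inclusion} using two standard permanence properties of exactness: that exactness is preserved under $*$-isomorphism, and that it is inherited by $C^{*}$-subalgebras. Since $\Uopp$ is a unitary, conjugation $T \mapsto \Uopp T \Uopp^{*}$ is a $*$-isomorphism of $\cCq$ onto $\Uopp \cCq \Uopp^{*}$, so it suffices to show that this latter algebra is exact. By Theorem \ref{thm:main-inclusion} we have $\Uopp \cCq \Uopp^{*} \subseteq \cC$, so the whole matter reduces to proving that $\cC$ is exact and then invoking the fact that a $C^{*}$-subalgebra of an exact $C^{*}$-algebra is again exact.

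First I would recall that $\cC$, being isomorphic to the extended Cuntz algebra, is in fact nuclear. The cleanest way to see this is to observe that $\cC$ contains the ideal $\mathcal{K}(\cF)$ of compact operators on $\cF$, and that the quotient $\cC / \mathcal{K}(\cF)$ is isomorphic to the Cuntz algebra $\mathcal{O}_{d}$. Both $\mathcal{K}(\cF)$ and $\mathcal{O}_{d}$ are nuclear, and nuclearity is preserved under extensions, so $\cC$ is nuclear. Since every nuclear $C^{*}$-algebra is exact, $\cC$ is exact.

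The substantive input from the general theory is Kirchberg's theorem that exactness passes to $C^{*}$-subalgebras. Granting this, the subalgebra $\Uopp \cCq \Uopp^{*}$ of the exact algebra $\cC$ is itself exact, and transporting back along the $*$-isomorphism $T \mapsto \Uopp^{*} T \Uopp$ then shows that $\cCq$ is exact. I do not expect any genuine obstacle here: essentially all of the content has been front-loaded into Theorem \ref{thm:main-inclusion}, and the corollary is a formal consequence. The only point that requires any care is to cite correctly the hereditary behaviour of exactness under subalgebras, since this is precisely the nontrivial fact being used, as opposed to the elementary invariance of exactness under unitary conjugation.
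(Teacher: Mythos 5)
Your proposal is correct and follows essentially the same route as the paper: both deduce nuclearity (hence exactness) of $\cC$ from its being an extension of the Cuntz algebra, and then conclude via Theorem \ref{thm:main-inclusion} together with the facts that exactness passes to $C^{*}$-subalgebras and is invariant under unitary conjugation. The extra detail you supply (identifying the ideal as the compacts and the quotient as $\mathcal{O}_{d}$) is a harmless elaboration of the same argument.
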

To prove Theorem \ref{thm:main-inclusion}, we first consider the
more general question of how to verify that an operator $T\in B(\cF)$
belongs to the algebra $\cC$. It is well known that a necessary condition
for $T$ to be in $\cC$ is that it commutes modulo the compact operators
with the $\mathrm{C}^{*}$-algebra generated by right creation
operators on $\cF$. Unfortunately, this condition isn't sufficient
(and wouldn't be sufficient even if we were to set $d$ equal to $1$,
cf. \cite{D1977}). Nonetheless, by restricting our attention to a
$*$-subalgebra of ``band-limited operators'' on $\cF$ and considering
commutators modulo a suitable ideal of compact operators in this algebra,
we do obtain a sufficient condition for $T$ to belong to $\cC$.
This bicommutant-type result is strong enough to help in the proof
of Theorem \ref{thm:main-inclusion}.

In addition to this introduction, the paper has four other sections.
In Section 2, we provide a brief review of the requisite background
material. In Section 3, we prove the above-mentioned bicommutant-type
result, Theorem \ref{thm:inclusion-criterion}. In Section 4, we establish
the main results, Theorem \ref{thm:main-inclusion} and Corollary
\ref{cor:Cq-exact}. In Section 5, we apply these results to the family
of $q$-Gaussian von Neumann algebras, showing in Theorem \ref{thm:weakly-exact}
that these algebras are weakly exact for every $q\in(-1,1)$.

\section{Review of background}

\subsection{\label{sub:q-fock-framework}Basic facts about the $q$-deformed
Fock space}

As explained in the introduction, there is a fairly large body of research
devoted to the $q$-deformed Fock framework and its generalizations.
Here we provide only a brief review of the terminology and facts which
will be needed in Section \ref{sec:main-result}.

\subsubsection{The $q$-deformed inner product}

As mentioned above, the integer $d\geq2$ will remain fixed throughout
this paper. Also fixed throughout this paper will be an orthonormal
basis $\xi_{1},\ldots,\xi_{d}$ for $\bC^{d}$. For every $n\geq1$
this gives us a preferred basis for $(\bC^{d})^{\otimes n}$, namely
\begin{equation}
\{\xi_{i_{1}}\otimes\cdots\otimes\xi_{i_{n}}\mid1\leq i_{1},\ldots,i_{n}\leq d\}.\label{eqn:natural-basis}\end{equation}
 This basis is orthonormal with respect to the usual inner product
on $(\bC^{d})^{\otimes n}$ (obtained by tensoring $n$ copies of
the standard inner product on $\bC^{d}$). As in the introduction,
we will use $\cF_{n}$ to denote the Hilbert space $(\bC^{d})^{\otimes n}$
endowed with this inner product. The full Fock space over $\bC^{d}$
is then the Hilbert space $\cF$ from Equation (\ref{eqn:non-deformed-fock}),
with the convention that $\cF_{0}=\bC\Omega$ for a distinguished
unit vector $\Omega$, referred to as the {}``vacuum vector''.

Now let $q\in(-1,1)$ be a deformation parameter. It was shown in
\cite{BS1990} that there exists a positive definite inner product
$\langle\cdot,\cdot\rangle_{q}$ on $(\bC^{d})^{\otimes n}$, uniquely
determined by the requirement that for vectors in the natural basis
(\ref{eqn:natural-basis}), one has the formula \begin{equation}
\langle\xi_{i_{1}}\otimes\cdots\otimes\xi_{i_{n}},\xi_{j_{1}}\otimes\cdots\otimes\xi_{j_{n}}\rangle_{q}=\sum_{\sigma}q^{\inv(\sigma)}\delta_{i_{1},\sigma(j_{1})}\cdots\delta_{i_{n},\sigma(j_{n})}.\label{eqn:def-inner-prod}\end{equation}
 The sum on the right-hand side of Equation (\ref{eqn:def-inner-prod})
is taken over all permutations $\sigma$ of $\{1,\ldots,n\}$, and
$\inv(\sigma)$ denotes the number of inversions of $\sigma$, i.e.
\[
\inv(\sigma):=\left|\{(i,j)\mid1\leq i<j\leq n,\ \sigma(i)>\sigma(j)\}\right|.\]
 Note that under this new inner product, the natural basis (\ref{eqn:natural-basis})
will typically no longer be orthogonal.

We will use $\cFq_{n}$ to denote the Hilbert space $(\bC^{d})^{\otimes n}$
endowed with this deformed inner product. In addition, we will use
the convention that $\cFq_{0}$ is the same as $\cF_{0}$, i.e. it
is spanned by the same vacuum vector $\Omega$. The $q$-deformed
Fock space over $\bC^{d}$ is then the Hilbert space $\cFq$ from
Equation (\ref{eqn:deformed-fock}). For $q=0$, the construction
of $\cFq$ yields the usual non-deformed Fock space $\cF$ from Equation
(\ref{eqn:non-deformed-fock}).

\subsubsection{\label{sub:deformed-creation-annih-ops}The deformed creation and
annihilation operators}

For every $1\leq j\leq d$, one has deformed left creation operators
$L_{j}^{(q)}\in\cB(\cFq)$ and deformed right creation operators $R_{j}^{(q)}\in\cB(\cFq)$,
which act on the natural basis of $\cFq_{n}$ by $\Lq_{j}(\Omega)=\Rq_{j}(\Omega)=\xi_{j}$
and \begin{equation}
\left\{ \begin{array}{l}
\Lq_{j}(\xi_{i_{1}}\otimes\cdots\otimes\xi_{i_{n}})=\xi_{j}\otimes\xi_{i_{1}}\otimes\cdots\otimes\xi_{i_{n}},\\
\Rq_{j}(\xi_{i_{1}}\otimes\cdots\otimes\xi_{i_{n}})=\xi_{i_{1}}\otimes\cdots\otimes\xi_{i_{n}}\otimes\xi_{j}.\end{array}\right.\label{eqn:creation}\end{equation}
 Their adjoints are the deformed left annihilation operators $(\Lq_{j})^{*}$
and the deformed right annihilation operators $(\Rq_{j})^{*}$, which
act on the natural basis of $\cFq_{n}$ by\begin{equation}
\left\{ \begin{aligned}(\Lq_{j})^{*} & (\xi_{i_{1}}\otimes\cdots\otimes\xi_{i_{n}})\\
 & =\sum_{m=1}^{n}q^{m-1}\delta_{j,i_{m}}\xi_{i_{1}}\otimes\cdots\otimes\widehat{\xi_{i_{m}}}\otimes\cdots\otimes\xi_{i_{n}},\\
(\Rq_{j})^{*} & (\xi_{i_{1}}\otimes\cdots\otimes\xi_{i_{n}})\\
 & =\sum_{m=1}^{n}q^{n-m}\delta_{i_{m},j}\xi_{i_{1}}\otimes\cdots\otimes\widehat{\xi_{i_{m}}}\otimes\cdots\otimes\xi_{i_{n}},\end{aligned}
\right.\label{eqn:annihilation}\end{equation}
 where the {}``hat'' symbol over the component $\xi_{i_{m}}$ means
that it is deleted from the tensor (e.g. $\xi_{i_{1}}\otimes\widehat{\xi_{i_{2}}}\otimes\xi_{i_{3}}$
= $\xi_{i_{1}}\otimes\xi_{i_{3}}$).

It's clear from these formulas that the left creation (left annihilation)
operators commute with the right creation (right annihilation) operators.
For the commutator of a left annihilation operator and a right creation
operator, a direct calculation (see also Lemma 3.1 from \cite{S2004})
gives the formula \begin{equation}
[(\Lq_{i})^{*},\Rq_{j}]\mid{}_{\cFq_{n}}=\delta_{ij}q^{n}I_{\cFq_{n}},\quad\forall n\geq1.\label{eqn:comm-relns}\end{equation}
 Taking adjoints gives the formula for the commutator of a left creation
operator and a right annihilation operator.

When we are working on the non-deformed Fock space $\cF$ corresponding
to the case when $q=0$, it will be convenient to suppress the superscripts
and write $L_{j}$ and $R_{j}$ for the left and right creation operators
respectively. Note that in this case, Equation (\ref{eqn:creation})
and Equation (\ref{eqn:annihilation}) imply that \begin{equation}
\sum_{j=1}^{d}L_{j}L_{j}^{*}=\sum_{j=1}^{d}R_{j}R_{j}^{*}=1-P_{0},\label{eqn:row-projection}\end{equation}
 where $P_{0}$ is the orthogonal projection onto $\cF_{0}$.

\subsubsection{\label{sub:unitary-conjugation}The unitary conjugation operator}

For every $n\geq1$, let $\Jq_{n}:\cFq_{n}\to\cFq_{n}$ be the operator
which reverses the order of the components in a tensor in $(\bC^{d})^{\otimes n}$,
i.e, $\Jq_{n}$ acts by the formula in Equation (\ref{eqn:1.11})
of the Introduction. A consequence of Equation (\ref{eqn:def-inner-prod}),
which defines the inner product $\langle\cdot,\cdot\rangle_{q}$,
is that $\Jq_{n}$ is a unitary operator in $B(\cFq_{n})$. Indeed,
this is easily seen to follow from Equation (\ref{eqn:def-inner-prod})
and the following basic fact about inversions of permutations: if
$\theta$ denotes the special permutation which reverses the order
on $\{1,\ldots,n\}$, then one has $\mbox{inv}(\theta\tau\theta)=\mbox{inv}(\tau)$
for every permutation $\tau$ of $\{1,\ldots,n\}$.

Therefore, we can speak of the unitary operator $\Jq\in B(\cFq)$
from Definition \ref{def:conjugation-operator}, which is obtained
as $\Jq:=\oplus_{n=0}^{\infty}\Jq_{n}$. Note that $\Jq$ is an involution,
i.e. $(\Jq)^{2}=I_{\cFq}$, and that it intertwines the left and right
creation operators, i.e. \begin{equation}
\Rq_{j}=\Jq\Lq_{j}\Jq,\quad1\leq j\leq d.\label{eqn:J-op-affects-Li}\end{equation}

\subsection{\label{sub:original-unitary}The original unitary operator}

In this subsection, we review the construction of the unitary $U:\cFq\to\cF$
from \cite{DN1993}, which appears in Definition \ref{def:Uopp}.
An important role in the construction of this unitary is played by
the positive operator \[
\Mq:=\sum_{j=1}^{d}\Lq_{j}(\Lq_{j})^{*}\in B(\cFq).\]
 Clearly $\Mq$ can be written as a direct sum $\Mq=\oplus_{n=0}^{\infty}\Mq_{n}$,
where $\Mq_{n}$ is a positive operator on $\cFq_{n}$, for every
$n\geq0$. Using Equation (\ref{eqn:creation}) and Equation (\ref{eqn:annihilation}),
one can show that $M_{n}^{(q)}$ acts on the natural basis of $\cFq_{n}$
by \begin{equation}
\Mq_{n}(\xi_{i_{1}}\otimes\cdots\otimes\xi_{i_{n}})=\sum_{m=1}^{n}q^{m-1}\xi_{i_{m}}\otimes\xi_{i_{1}}\otimes\cdots\otimes\widehat{\xi_{i_{m}}}\otimes\cdots\otimes\xi_{i_{n}}.\label{eqn:how-Mq-acts}\end{equation}
 (Recall that the {}``hat'' symbol over the component $\xi_{i_{m}}$
means that it is deleted from the tensor.)

With the exception of $\Mq_{0}$ (which is zero), the operators $\Mq_{n}$
are invertible. This is implied by Lemma 4.1 of \cite{DN1993}, which
also gives the estimate \begin{equation}
\|(\Mq_{n})^{-1}\|\leq(1-|q|)\prod_{k=1}^{\infty}\frac{1+|q|^{k}}{1-|q|^{k}}<\infty,\quad\forall n\geq1.\label{eqn:DNspectrum}\end{equation}
 An important thing to note about Equation (\ref{eqn:DNspectrum})
is that the upper bound on the right-hand side is independent of $n$.

The unitary operator $U$ is defined as a direct sum, $U:=\oplus_{n=0}^{\infty}U_{n}$,
where the unitaries $U_{n}:\cFq_{n}\to\cF_{n}$ are defined recursively
as follows: we first define $U_{0}$ by $U_{0}(\Omega)=\Omega$, and
for every $n\geq1$ we define $U_{n}$ by \begin{equation}
U_{n}:=(I\otimes U_{n-1})(\Mq_{n})^{1/2}.\label{eqn:def-Un}\end{equation}
 In Proposition 3.2 of \cite{DN1993} it was shown that $U_{n}$ as
defined in Equation (\ref{eqn:def-Un}) is actually a unitary operator,
and hence that $U$ is a unitary operator. Moreover, in Section 4
of \cite{DN1993} it was shown that $\cC\subseteq U\cCq U^{*}$ for
every $q\in(-1,1)$.

\subsection{\label{sub:sbl-operators}Summable band-limited operators}

Throughout this section, we fix a Hilbert space $\cH$, and in addition
we fix an orthogonal direct sum decomposition of $\cH$ as \begin{equation}
\cH=\oplus_{n=0}^{\infty}\cH_{n}.\label{eqn:orthog-decomp-H}\end{equation}
 We will study certain properties an operator $T\in B(\cH)$ can have
with respect to this decomposition of $\cH$. We would like to emphasize
that the concepts considered here depend not only on $\cH$, but also
on the orthogonal decomposition for $\cH$ in Equation (\ref{eqn:orthog-decomp-H}). 
\begin{defn}
\noindent \label{def:band-limited} Let $T$ be an operator in $B(\cH)$.
If there exists a non-negative integer $b$ such that \begin{equation}
T(\cH_{n})\subseteq\bigoplus_{\substack{m\geq0\\
|m-n|\leq b}
}\cH_{m},\quad\forall n\geq0,\label{eqn:band-limited}\end{equation}
 then we will say that $T$ is \emph{band-limited}. A number $b$
as in Equation (\ref{eqn:band-limited}) will be called a \emph{band
limit} for $T$. The set of all band-limited operators in $B(\cH)$
will be denoted by $\cB$. 
\end{defn}

\begin{defn}
\noindent \label{def:summable-band-limited} Let $T$ be an operator
in $\cB$. We will say that $T$ is \emph{summable} when it has the
property that \[
\sum_{n=0}^{\infty}\|T\mid{}_{\cH_{n}}\|<\infty,\]
 where we have used $T\mid{}_{\cH_{n}}\in B(\cH_{n},\cH)$ to denote
the restriction of $T$ to $\cH_{n}$. The set of all summable band-limited
operators in $B(\cH)$ will be denoted by $\cS$.\end{defn}
\begin{prop}
With respect to the preceding definitions, 
\begin{enumerate}
\item $\cB$ is a unital $*$-subalgebra of $B(\cH)$ and 
\item $\cS$ is a two-sided ideal of $\cB$ which is closed under taking
adjoints. 
\end{enumerate}
\end{prop}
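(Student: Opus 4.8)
The plan is to handle part (1) by direct bookkeeping of band limits, and part (2) by reducing every summability estimate to a single ``bounded overlap'' counting argument. Throughout, write $P_n$ for the orthogonal projection of $\cH$ onto $\cH_n$, and observe that $T$ being band-limited with band limit $b$ is equivalent to the block condition $P_m T P_n = 0$ whenever $|m-n| > b$, while $\|T|_{\cH_n}\| = \|T P_n\|$.

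For part (1), I would verify each closure property at the level of these blocks. The identity has band limit $0$, so $I \in \cB$. If $S, T \in \cB$ have band limits $b_S, b_T$, then $S + T$ has band limit $\max(b_S, b_T)$, $cT$ has band limit $b_T$, and $ST$ has band limit $b_S + b_T$: indeed $T P_n$ lands in blocks within $b_T$ of $n$, and $S$ then spreads each of these by at most $b_S$, so by the triangle inequality $ST$ maps $\cH_n$ into blocks within $b_S + b_T$ of $n$. For adjoints, $P_m T P_n = 0$ for $|m-n| > b$ yields $P_n T^* P_m = (P_m T P_n)^* = 0$ for $|m-n| > b$, so $T^*$ has the same band limit $b$. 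This shows $\cB$ is a unital $*$-subalgebra.

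For part (2), I would first record that $\cS$ is a linear subspace of $\cB$, since the restriction norms satisfy $\|(cT)|_{\cH_n}\| = |c|\,\|T|_{\cH_n}\|$ and $\|(S+T)|_{\cH_n}\| \le \|S|_{\cH_n}\| + \|T|_{\cH_n}\|$, so the defining series stays finite. The crux is the following elementary observation: for any band limit $b$ and any summable sequence $(a_k)_{k \ge 0}$ of nonnegative reals, one has $\sum_n \sum_{|k-n| \le b} a_k \le (2b+1)\sum_k a_k < \infty$, because each index $k$ lies in at most $2b+1$ of the neighbourhoods $\{n : |k-n| \le b\}$. Using this I would prove adjoint-closure of $\cS$: for $T \in \cS$ with band limit $b$, bound the ``row norm'' by nearby ``column norms'' via $\|T^*|_{\cH_n}\| = \|P_n T\| \le \sum_{|m-n| \le b} \|P_n T P_m\| \le \sum_{|m-n| \le b} \|T|_{\cH_m}\|$, and then apply the overlap estimate with $a_m = \|T|_{\cH_m}\|$ to conclude $\sum_n \|T^*|_{\cH_n}\| < \infty$, so $T^* \in \cS$.

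Finally I would establish the ideal property for $S \in \cB$ and $T \in \cS$, noting that $ST$ and $TS$ already lie in $\cB$ by part (1), so only summability is at issue. Left multiplication is immediate: $\|(ST)|_{\cH_n}\| = \|S T P_n\| \le \|S\|\,\|T|_{\cH_n}\|$, whence $\sum_n \|(ST)|_{\cH_n}\| \le \|S\| \sum_n \|T|_{\cH_n}\| < \infty$ and $ST \in \cS$. For right multiplication I would avoid repeating the counting by taking adjoints: $TS = (S^* T^*)^*$, where $S^* \in \cB$ and $T^* \in \cS$ by the adjoint-closure just proved, so $S^* T^* \in \cS$ by the left-multiplication case, and hence $TS \in \cS$ by adjoint-closure once more. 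The main obstacle is exactly the right-multiplication and adjoint direction: left multiplication is controlled trivially by the operator norm, but bounding row sums by column sums genuinely uses the finiteness of the band limit, and it is the bounded-overlap lemma that makes this succeed at the cost of only the harmless factor $(2b+1)$.
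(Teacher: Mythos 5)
Your proposal is correct and follows essentially the same route as the paper: part (1) by direct bookkeeping of band limits (which the paper leaves as an exercise), adjoint-closure of $\cS$ by bounding each $\|T^*\mid_{\cH_n}\|$ by the sum of at most $2b+1$ nearby column norms, left multiplication controlled trivially by the operator norm, and the right-sided ideal property deduced from self-adjointness. The only difference is that you spell out the bounded-overlap count and part (1) in more detail than the paper does; no gaps.
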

\begin{proof}
The proof of (1) is left as an easy exercise for the reader. To verify
(2), we first show that $\cS$ is closed under taking adjoints. Suppose
$T\in\cS$, and let $b$ be a band limit for $T$. By examining the
matrix representations of $T$ and of $T^{*}$ with respect to the
orthogonal decomposition (\ref{eqn:orthog-decomp-H}), it is easily
verified that \[
\|T^{*}\mid_{\cH_{n}}\|\leq\sum_{\substack{m\geq0\\
|m-n|\leq b}
}\|T\mid_{\cH_{m}}\|,\quad\forall n\geq0.\]
 This implies that \[
\sum_{n=0}^{\infty}\|T^{*}\mid_{\cH_{n}}\|\leq(2b+1)\sum_{m=0}^{\infty}\|T\mid_{\cH_{m}}\|<\infty,\]
 which gives $T^{*}\in\cS$. Next, we show that $\cS$ is a two-sided
ideal of $\cB$. Since $\cS$ was proved to be self-adjoint, it will
suffice to show that it is a left ideal. It is clear that $\cS$ is
closed under linear combinations. The fact that $\cS$ is a left ideal
now follows from the simple observation that for $T\in\cB$ and $S\in\cS$
we have \[
\sum_{n=0}^{\infty}\|TS\mid{}_{\cH_{n}}\|\leq\|T\|\sum_{n=0}^{\infty}\|S\mid{}_{\cH_{n}}\|,<\infty,\]
 which implies $TS\in\cS$. 
\end{proof}
In the following definition, we identify some special types of band-limited
operators.
\begin{defn}
\label{def:types-of-band-ops} Let $T$ be an operator in $\cB$. 
\begin{enumerate}
\item If $T$ satisfies $T(\cH_{n})\subseteq\cH_{n}$ for all $n\geq0$,
then we will say that $T$ is \emph{block-diagonal}. 
\item If there is $k\geq0$ such that $T$ satisfies $T(\cH_{n})\subseteq\cH_{n+k}$
for $n\geq0$, then we will say that $T$ is \emph{$k$-raising.} 
\item If there is $k\geq0$ such that $T$ satisfies $T(\cH_{n})\subseteq\cH_{n-k}$
for $n\geq k$ and $T(\cH_{n})=\{0\}$ for $n<k$, then we will say
that $T$ \emph{is $k$-lowering.} 
\end{enumerate}
\end{defn}
Note that a block-diagonal operator is both $0$-raising and $0$-lowering.

The following proposition gives a Fourier-type decomposition for band-limited
operators.
\begin{prop}
\label{prop:fourier-decomp} Let $T$ be an operator in $\cB$ with
a band-limit $b\geq0$, as in Definition \ref{def:band-limited}.
Then we can decompose $T$ as \begin{equation}
T=\sum_{k=0}^{b}X_{k}+\sum_{k=1}^{b}Y_{k},\label{eqn:fourier-decomp}\end{equation}
 where each $X_{k}$ is a $k$-raising operator for $0\leq k\leq b$,
and each $Y_{k}$ is a $k$-lowering operator for $1\leq k\leq b$.
This decomposition is unique. Moreover, if $T$ is summable in the
sense of Definition \ref{def:summable-band-limited}, then each of
the $X_{k}$ and $Y_{k}$ are summable. \end{prop}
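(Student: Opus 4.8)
The plan is to produce the decomposition by projecting onto the diagonals of the block matrix of $T$. Let $P_m$ denote the orthogonal projection of $\cH$ onto $\cH_m$. For each $k$ with $0 \leq k \leq b$, I would define the $k$-raising part by
\[
X_k := \sum_{n=0}^{\infty} P_{n+k}\, T\, P_n,
\]
and for each $k$ with $1 \leq k \leq b$, I would define the $k$-lowering part by
\[
Y_k := \sum_{n=k}^{\infty} P_{n-k}\, T\, P_n.
\]
The first thing to check is that these series converge (say in the strong operator topology, or on a dense subspace) and genuinely define bounded operators; this is where the band-limit hypothesis does the work, since for a fixed $n$ the vector $T\, P_n \eta$ lives in $\bigoplus_{|m-n|\leq b}\cH_m$, so $P_{n\pm k} T P_n$ is nonzero only for $0 \leq k \leq b$, and the terms in each sum act on mutually orthogonal subspaces $\cH_n$. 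Because of this orthogonality of domains, $\|X_k \mid_{\cH_n}\| = \|P_{n+k} T \mid_{\cH_n}\| \leq \|T \mid_{\cH_n}\|$, and similarly for $Y_k$, so each piece is bounded with norm at most $\|T\|$.

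Next I would verify the three required properties. That $X_k$ is $k$-raising and $Y_k$ is $k$-lowering is immediate from the definitions, since $X_k(\cH_n) = P_{n+k} T(\cH_n) \subseteq \cH_{n+k}$ and $Y_k$ annihilates $\cH_n$ for $n < k$. To see that the sum recovers $T$, I would fix $n$ and compute $\bigl(\sum_{k=0}^{b} X_k + \sum_{k=1}^{b} Y_k\bigr)\mid_{\cH_n}$; the surviving terms are exactly $\sum_{|m-n|\leq b} P_m T \mid_{\cH_n}$, which equals $T \mid_{\cH_n}$ because $T(\cH_n) \subseteq \bigoplus_{|m-n|\leq b}\cH_m$ by the band-limit condition, so $\sum_{|m-n|\leq b} P_m$ acts as the identity on $T(\cH_n)$. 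Since the $\cH_n$ span a dense subspace and everything is bounded, this gives $T = \sum_{k=0}^{b} X_k + \sum_{k=1}^{b} Y_k$.

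For uniqueness I would argue that the $k$-raising/$k$-lowering structure forces $P_{n+k} T P_n$ and $P_{n-k} T P_n$ to be the respective blocks of $T$: if $T = \sum_k X_k' + \sum_k Y_k'$ is any such decomposition, then compressing by $P_{n+k}(\cdot)P_n$ kills every term except $X_k'$, so $P_{n+k} T P_n = P_{n+k} X_k' P_n = X_k' \mid_{\cH_n}$, and summing over $n$ pins down $X_k' = X_k$; the lowering case is symmetric. Finally, summability is inherited directly from the norm estimates above: since $\|X_k \mid_{\cH_n}\| \leq \|T \mid_{\cH_n}\|$ and likewise $\|Y_k \mid_{\cH_n}\| \leq \|T \mid_{\cH_n}\|$, summing over $n$ gives $\sum_n \|X_k \mid_{\cH_n}\| \leq \sum_n \|T \mid_{\cH_n}\| < \infty$ whenever $T$ is summable, and the same for $Y_k$. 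I do not anticipate a serious obstacle here; the only point demanding a little care is the convergence and well-definedness of the defining series, which I would handle by noting that for each $n$ only finitely many blocks are nonzero and the blocks act on orthogonal summands, so the putative operators are bounded by $\|T\|$ on each $\cH_n$ and hence bounded on $\cH$.
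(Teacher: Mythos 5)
Your proposal is correct and follows essentially the same route as the paper: both define $X_k$ and $Y_k$ by compressing $T$ to its $k$-th super- and sub-diagonals (i.e.\ $X_k\mid_{\cH_n}=P_{n+k}T\mid_{\cH_n}$, $Y_k\mid_{\cH_n}=P_{n-k}T\mid_{\cH_n}$), obtain boundedness and the estimate $\|X_k\mid_{\cH_n}\|\leq\|T\mid_{\cH_n}\|$ from orthogonality of the blocks, and deduce existence, uniqueness, and summability exactly as you do. No gaps.
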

\begin{proof}
First, fix an integer $k$ satisfying $0\leq k\leq b$. For each $n\geq0$,
consider the linear operator $P_{n+k}T\mid_{\cH_{n}}\in B(\cH_{n},\cH_{n+k})$
which results from composing the orthogonal projection $P_{n+k}$
onto $\cH_{n+k}$ with the restriction $T\mid_{\cH_{n}}.$ Clearly
$\|P_{n+k}T\mid_{\cH_{n}}\|\leq\|T\|$. This allows us to define an
operator $X_{k}\in B(\cH)$ which acts on $\cH_{n}$ by \begin{equation}
X_{k}\xi=P_{n+k}T\xi,\quad\forall\xi\in\cH_{n}.\label{eq:Xs}\end{equation}
 It follows from this definition that $X_{k}$ is a $k$-raising operator.

Similarly, for an integer $k$ satisfying $1\leq k\leq b$, we can
define a $k$-lowering operator $Y_{k}\in B(\cH)$ which acts on $\mbox{\ensuremath{\xi\in}}\cH_{n}$
by\begin{equation}
Y_{k}\xi=\begin{cases}
P_{n-k}T\xi & \mbox{if }k\leq n,\\
0 & \mbox{if }k>n.\end{cases}\label{eq:Ys}\end{equation}

It's clear that Equation (\ref{eqn:fourier-decomp}) holds with each
$X_{k}$ and $Y_{k}$ defined as above. Conversely, if Equation (\ref{eqn:fourier-decomp})
holds, then it's clear that each $X_{k}$ and $Y_{k}$ is completely
determined as in Equation (\ref{eq:Xs}) and Equation (\ref{eq:Ys})
respectively. This implies the uniqueness of this decomposition.

Finally, suppose $T$ is summable. The fact that each $X_{k}$ and
$Y_{k}$ is summable then follows from the observation that Equation
(\ref{eq:Xs}) and Equation (\ref{eq:Ys}) imply $\|X_{k}\mid_{\cH_{n}}\|\leq\|T\mid_{\cH_{n}}\|$
and $\|Y_{k}\mid_{\cH_{n}}\|\leq\|T\mid_{\cH_{n}}\|$ for every $n\geq0$. 
\end{proof}
The following result about commutators will be needed in Section \ref{sec:main-result}. 
\begin{prop}
\label{prop:comm-using-pedersen} Let $T\in\cB$ be a positive block-diagonal
operator, and let $V\in\cB$ be a $1$-raising operator. Suppose that
the commutator $[T,V]$ satisfies \begin{equation}
\sum_{n=0}^{\infty}\|[T,V]\mid{}_{\cH_{n}}\|^{1/2}<\infty.\label{eqn:sqrt-summable}\end{equation}
 Then the commutator $[T^{1/2},V]$ is a summable 1-raising operator.\end{prop}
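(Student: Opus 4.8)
The plan is to reduce the claim to a single Pedersen-type operator inequality that bounds a commutator with $T^{1/2}$ by the square root of the corresponding commutator with $T$, and then to sum the resulting block estimate against the hypothesis (\ref{eqn:sqrt-summable}).

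First I would record the easy structural facts. Since $T$ is positive and block-diagonal, write $T=\oplus_{n\ge 0}T_{n}$ with each $T_{n}\ge 0$ on $\cH_{n}$; the block projections commute with $T$ and hence with $T^{1/2}$, so $T^{1/2}=\oplus_{n\ge 0}T_{n}^{1/2}$ is again positive and block-diagonal. As $V$ is $1$-raising, both $T^{1/2}V$ and $VT^{1/2}$ are $1$-raising, so $[T^{1/2},V]$ is a bounded $1$-raising operator and only its summability is in question. Writing $V_{n}:=V\mid_{\cH_{n}}\in B(\cH_{n},\cH_{n+1})$, the relevant restrictions are
\[
[T^{1/2},V]\mid_{\cH_{n}}=T_{n+1}^{1/2}V_{n}-V_{n}T_{n}^{1/2},\qquad [T,V]\mid_{\cH_{n}}=T_{n+1}V_{n}-V_{n}T_{n}.
\]
Thus it suffices to control the norm of the mixed expression $A^{1/2}X-XB^{1/2}$ by that of $AX-XB$, with $A=T_{n+1}$, $B=T_{n}$, $X=V_{n}$.

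The heart of the matter is the inequality: for positive operators $A,B$ and bounded $X$,
\[
\|A^{1/2}X-XB^{1/2}\|\ \le\ K\,\|X\|^{1/2}\,\|AX-XB\|^{1/2},
\]
for a universal constant $K$ (one may take $K=4\sqrt2/\pi$). I would prove this via the integral representation $A^{1/2}=\tfrac{1}{\pi}\int_{0}^{\infty}A(A+\lambda)^{-1}\lambda^{-1/2}\,d\lambda$, which after distributing and regrouping yields
\[
A^{1/2}X-XB^{1/2}=\frac{1}{\pi}\int_{0}^{\infty}\lambda^{1/2}(A+\lambda)^{-1}(AX-XB)(B+\lambda)^{-1}\,d\lambda.
\]
The estimate then splits at a threshold $\lambda_{0}$: for large $\lambda$ I use $\|(A+\lambda)^{-1}\|,\|(B+\lambda)^{-1}\|\le\lambda^{-1}$ to bound the integrand by $\|AX-XB\|\,\lambda^{-3/2}$, while for small $\lambda$ I exploit the cancellation $\|A(A+\lambda)^{-1}\|\le 1$ to obtain the integrable bound $2\|X\|\,\lambda^{-1/2}$; optimizing over $\lambda_{0}$ produces the displayed inequality. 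The main obstacle is precisely this small-$\lambda$ estimate, since the naive bound gives the non-integrable singularity $\lambda^{-3/2}$ at the origin, and it is only the operator bound $\|A(A+\lambda)^{-1}\|\le 1$ that restores integrability. (If one prefers, this inequality can instead be quoted directly from Pedersen's results on operator-monotone functions.)

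Finally I would assemble the proof. Applying the inequality block by block with $A=T_{n+1}$, $B=T_{n}$, $X=V_{n}$, and using $\|V_{n}\|\le\|V\|$, gives
\[
\|[T^{1/2},V]\mid_{\cH_{n}}\|\ \le\ K\,\|V\|^{1/2}\,\|[T,V]\mid_{\cH_{n}}\|^{1/2},\qquad n\ge 0.
\]
Summing over $n$ and invoking the hypothesis (\ref{eqn:sqrt-summable}) yields $\sum_{n}\|[T^{1/2},V]\mid_{\cH_{n}}\|<\infty$, so $[T^{1/2},V]$ lies in $\cS$; since it is already $1$-raising, this is exactly the assertion.
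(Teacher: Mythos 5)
Your argument is correct, and at the level of the proposition it follows the same strategy as the paper: both reduce to the per-block estimate $\|T_{n+1}^{1/2}V_{n}-V_{n}T_{n}^{1/2}\|\leq C\,\|V\|^{1/2}\,\|T_{n+1}V_{n}-V_{n}T_{n}\|^{1/2}$ and then sum against the hypothesis (\ref{eqn:sqrt-summable}). Where you genuinely diverge is in how that estimate is obtained. The paper dilates to the space $\cH_{n}\oplus\cH_{n+1}$, taking $A=\mathrm{diag}(T_{n},T_{n+1})$ and the self-adjoint off-diagonal operator $B$ with corners $V_{n}$ and $V_{n}^{*}$, and then quotes Pedersen's commutator inequality $\|[A^{1/2},B]\|\leq\frac{5}{4}\|B\|^{1/2}\|[A,B]\|^{1/2}$; reading off the corners gives the block estimate with $C=\frac{5}{4}$. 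You instead prove the asymmetric intertwiner inequality $\|A^{1/2}X-XB^{1/2}\|\leq K\|X\|^{1/2}\|AX-XB\|^{1/2}$ directly: your integral identity $A^{1/2}X-XB^{1/2}=\frac{1}{\pi}\int_{0}^{\infty}\lambda^{1/2}(A+\lambda)^{-1}(AX-XB)(B+\lambda)^{-1}\,d\lambda$ is correct (it follows from $A(A+\lambda)^{-1}=I-\lambda(A+\lambda)^{-1}$), the two-regime bound is the right fix for the non-integrable singularity at $\lambda=0$, and optimizing at $\lambda_{0}=\|AX-XB\|/(2\|X\|)$ does give $K=4\sqrt{2}/\pi$ (the degenerate cases $X=0$ and $AX=XB$ being trivial from the same formula). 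Your route buys self-containedness — you essentially reprove Pedersen's lemma in exactly the asymmetric generality needed, with no dilation — while the paper's route buys brevity and the marginally better constant $5/4$, which is irrelevant for summability. One small caveat: your closing aside that the asymmetric inequality ``can be quoted directly from Pedersen'' is slightly optimistic, since the cited result is stated for a commutator with a single positive operator and recovering the asymmetric form from it requires precisely the $2\times2$ dilation the paper uses; but as you supply a full proof, nothing in your argument depends on that remark.
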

\begin{proof}
For every $n\geq0$, let $T_{n}=T\mid_{\cH_{n}}\in B(\cH_{n})$ and
let $V_{n}=V\mid_{\cH_{n}}\in B(\cH_{n},\cH_{n+1})$. Since $T$ is
block-diagonal and $V$ is $1$-raising, it's clear that $[T,V]$
and $[T^{1/2},V]$ are 1-raising operators which satisfy \[
[T,V]\mid{}_{\cH_{n}}=T_{n+1}V_{n}-V_{n}T_{n},\quad\forall n\geq0,\]
 and\[
[T^{1/2},V]\mid{}_{\cH_{n}}=T_{n+1}^{1/2}V_{n}-V_{n}T_{n}^{1/2},\quad\forall n\geq0.\]
 It follows that the hypothesis (\ref{eqn:sqrt-summable}) can be
rewritten as\[
\sum_{n=0}^{\infty}\|T_{n+1}V_{n}-V_{n}T_{n}\|^{1/2}<\infty,\]
 while the required conclusion that $[T^{1/2},V]\in\cS$ is equivalent
to \[
\sum_{n=0}^{\infty}\|T_{n+1}^{1/2}V_{n}-V_{n}T_{n}^{1/2}\|<\infty.\]
 We will prove that this holds by showing that for every $n\geq0$,
\begin{equation}
\|T_{n+1}^{1/2}V_{n}-V_{n}T_{n}^{1/2}\|\leq\frac{5}{4}\|V\|^{1/2}\|T_{n+1}V_{n}-V_{n}T_{n}\|^{1/2}.\label{eqn:pedersen-comm-final-ineq}\end{equation}

For the rest of the proof, fix $n\geq0$. Consider the operators $A,B\in B(\cH_{n}\oplus\cH_{n+1})$
which, written as $2\times2$ matrices, are given by \[
A:=\left[\begin{array}{cc}
T_{n} & 0\\
0 & T_{n+1}\end{array}\right],\qquad B:=\left[\begin{array}{cc}
0 & V_{n}^{*}\\
V_{n} & 0\end{array}\right].\]
 Since $T$ is positive, it follows that $A$ is positive, with \[
A^{1/2}=\left[\begin{array}{cc}
T_{n}^{1/2} & 0\\
0 & T_{n+1}^{1/2}\end{array}\right].\]
 A well-known commutator inequality (see e.g. \cite{P1993}) gives
\begin{equation}
\|[A^{1/2},B]\|\leq\frac{5}{4}\|B\|^{1/2}\|[A,B]\|^{1/2}.\label{eqn:2C.65}\end{equation}
 From the definitions of $A$ and $B$, we compute \[
[A,B]=\left[\begin{array}{cc}
0 & (T_{n+1}V_{n}-V_{n}T_{n})^{*}\\
T_{n+1}V_{n}-V_{n}T_{n} & 0\end{array}\right],\]
 and this implies $\|[A,B]\|=\|T_{n+1}V_{n}-V_{n}T_{n}\|$. Similarly,
$\|[A^{1/2},B]\|=\|T_{n+1}^{1/2}V_{n}-V_{n}T_{n}^{1/2}\|$, and it's
clear that $\|B\|=\|V_{n}\|$. By substituting these equalities into
(\ref{eqn:2C.65}) we obtain \[
\|T_{n+1}^{1/2}V_{n}-V_{n}T_{n}^{1/2}\|\leq\frac{5}{4}\|V_{n}\|^{1/2}\|T_{n+1}V_{n}-V_{n}T_{n}\|^{1/2}.\]
 Since $\|V_{n}\|\leq\|V\|$, this clearly implies that (\ref{eqn:pedersen-comm-final-ineq})
holds. 
\end{proof}

\section{An inclusion criterion}

In this section, we work exclusively in the framework of the (non-deformed)
extended Cuntz algebra $\cC$. We will use the terminology of Subsection
\ref{sub:sbl-operators} with respect to the natural decomposition
$\cF=\oplus_{n=0}^{\infty}\cF_{n}$. In particular, we will refer
to the unital $*$-subalgebra $\cB\subseteq B(\cF)$ which consists
of band-limited operators as in Definition \ref{def:band-limited},
and to the ideal $\cS$ of $\cB$ which consists of summable band-limited
operators as in Definition \ref{def:summable-band-limited}.

The main result of this section is Theorem \ref{thm:inclusion-criterion}.
This is an analogue in the $\mathrm{C}^{*}$-framework of the bicommutant
theorem from von Neumann algebra theory, where we restrict our attention
to the $*$-algebra $\cB$ and consider commutators modulo the ideal
$\cS$. In this framework, the role of ``commutant'' is played
by the $\mathrm{C}^{*}$-algebra generated by right creation operators
on $\cF$.

For clarity, we will first consider the special case of a block-diagonal
operator.
\begin{defn}
\label{def:approximants} Let $T\in\cB$ be a block-diagonal operator.
The sequence of \emph{$\mathcal{C}$-approximants} for $T$ is the
sequence $(A_{n})_{n=0}^{\infty}$ of block-diagonal elements of $\cC$
defined recursively as follows: we first define $A_{0}$ by $A_{0}=\langle T(\Omega),\Omega\rangle I_{\cF}$,
and for every $n\geq0$ we define $A_{n+1}$ by \begin{equation}
A_{n+1}:=A_{n}+\sum_{\begin{array}{c}
{\scriptstyle 1\leq i_{1},\ldots,i_{n+1}\leq d}\\
{\scriptstyle 1\leq j_{1},\ldots,j_{n+1}\leq d}\end{array}}c_{i_{1},\ldots,i_{n+1};j_{1},\ldots,j_{n+1}}\bigl(L_{i_{1}}\cdots L_{i_{n+1}}\bigr)\bigl(L_{j_{1}}\cdots L_{j_{n+1}}\bigr)^{*},\label{eqn:approximant}\end{equation}
 where the coefficients $c_{i_{1},\ldots,i_{n+1};j_{1},\ldots,j_{n+1}}$
are defined by \begin{equation}
c_{i_{1},\ldots,i_{n+1};j_{1},\ldots,j_{n+1}}:=\begin{array}[t]{l}
\langle T(\xi_{j_{1}}\otimes\cdots\otimes\xi_{j_{n+1}}),\xi_{i_{1}}\otimes\cdots\otimes\xi_{i_{+1n}}\rangle\\
-\delta_{i_{n+1},j_{n+1}}\cdot\langle T(\xi_{j_{1}}\otimes\cdots\otimes\xi_{j_{n}}),\xi_{i_{1}}\otimes\cdots\otimes\xi_{i_{n}}\rangle.\end{array}\label{eqn:approximant-coeff}\end{equation}
 
\end{defn}
The main property of the approximant $A_{n}$ is that it agrees with
the operator $T$ on each subspace $\cF_{m}$ for $m\leq n$. More
precisely, we have the following lemma.
\begin{lem}
\label{lem:approximants-work} Let $T\in\cB$ be a block-diagonal
operator, and let $(A_{n})_{n=0}^{\infty}$ be the sequence of $\mathcal{C}$-approximants
for $T$, as in Definition \ref{def:approximants}. Then for every
$m\geq0$, \begin{equation}
A_{n}\mid{}_{\cF_{m}}=\left\{ \begin{array}{ll}
T\mid{}_{\cF_{m}} & \mbox{if }m\leq n,\\
(T\mid{}_{\cF_{n}})\otimes I_{m-n} & \mbox{if }m>n.\end{array}\right.\label{eqn:3.31}\end{equation}
 \end{lem}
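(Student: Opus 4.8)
The plan is to prove the identity (\ref{eqn:3.31}) by induction on $n$, using the recursive definition of the $\cC$-approximants together with the explicit action of words in the creation operators on the natural basis of $\cF$.

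For the base case $n=0$ I would argue directly: since $T$ is block-diagonal, $T(\Omega)\in\cF_0=\bC\Omega$, so $T\mid_{\cF_0}=\langle T(\Omega),\Omega\rangle I_{\cF_0}=A_0\mid_{\cF_0}$, and for $m>0$ both $A_0\mid_{\cF_m}$ and $(T\mid_{\cF_0})\otimes I_m$ are just multiplication by the scalar $\langle T(\Omega),\Omega\rangle$, hence agree. For the inductive step, assume (\ref{eqn:3.31}) holds for $A_n$ and write $A_{n+1}=A_n+D_{n+1}$, where $D_{n+1}$ is the double sum added in (\ref{eqn:approximant}). The first task is to compute how $D_{n+1}$ acts on a basis vector $\eta=\xi_{k_1}\otimes\cdots\otimes\xi_{k_m}$ of $\cF_m$. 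At $q=0$, Equation (\ref{eqn:annihilation}) shows that the annihilation word $(L_{j_1}\cdots L_{j_{n+1}})^{*}$ peels off the first $n+1$ legs of $\eta$, yielding the factor $\delta_{j_1,k_1}\cdots\delta_{j_{n+1},k_{n+1}}$, after which the creation word $L_{i_1}\cdots L_{i_{n+1}}$ prepends $\xi_{i_1}\otimes\cdots\otimes\xi_{i_{n+1}}$. In particular $D_{n+1}$ is block-diagonal and annihilates $\cF_m$ for $m\le n$; thus for $m\le n$ I get $A_{n+1}\mid_{\cF_m}=A_n\mid_{\cF_m}=T\mid_{\cF_m}$ directly from the inductive hypothesis.

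For $m\ge n+1$ the heart of the argument is to split the coefficient (\ref{eqn:approximant-coeff}) into its two summands and track them separately. Expanding $T$ applied to a basis vector in the orthonormal basis and using that the inner product on $(\bC^d)^{\otimes(n+1)}$ factors over legs, the contribution of the first summand of (\ref{eqn:approximant-coeff}) to $D_{n+1}\eta$ is exactly $\bigl((T\mid_{\cF_{n+1}})\otimes I_{m-(n+1)}\bigr)\eta$ (which is just $T\eta$ when $m=n+1$). The second summand carries a factor $\delta_{i_{n+1},k_{n+1}}$, which collapses the sum over $i_{n+1}$, and its contribution to $D_{n+1}\eta$ (including the minus sign it appears with) is precisely $-\bigl((T\mid_{\cF_n})\otimes I_{m-n}\bigr)\eta$, which by the inductive hypothesis equals $-A_n\eta$. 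Therefore $A_{n+1}\eta=A_n\eta+D_{n+1}\eta=A_n\eta+\bigl((T\mid_{\cF_{n+1}})\otimes I_{m-(n+1)}\bigr)\eta-A_n\eta$, the two copies of $A_n\eta$ cancel, and the required value $\bigl((T\mid_{\cF_{n+1}})\otimes I_{m-(n+1)}\bigr)\eta$ remains. This treats $m=n+1$ and $m>n+1$ uniformly, completing the induction.

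The step I expect to be the main obstacle is recognizing — and verifying cleanly — that the subtracted term in (\ref{eqn:approximant-coeff}), namely $\delta_{i_{n+1},j_{n+1}}\langle T(\xi_{j_1}\otimes\cdots\otimes\xi_{j_n}),\xi_{i_1}\otimes\cdots\otimes\xi_{i_n}\rangle$, is engineered exactly so that, after contraction with the word operators, it reproduces $A_n\mid_{\cF_m}$ through the inductive hypothesis and cancels it. Once this telescoping is identified, the remainder is careful index bookkeeping with the orthonormal basis of $\cF$; the only mild subtlety is keeping the tensor-leg decomposition $\cF_m=\cF_{n+1}\otimes\cF_{m-(n+1)}$ consistent with the one, $\cF_m=\cF_n\otimes\cF_{m-n}$, used by the inductive hypothesis.
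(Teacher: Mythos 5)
Your proof is correct and follows essentially the same route as the paper: induction on $n$, using the action of the word operators $L_{i_1}\cdots L_{i_{n+1}}(L_{j_1}\cdots L_{j_{n+1}})^{*}$ on the natural basis to collapse the double sum, and using the inductive hypothesis to show that the subtracted term in (\ref{eqn:approximant-coeff}) exactly cancels $A_n$ on $\cF_m$ for $m\geq n+1$. The only difference is organizational (you compute the action on basis vectors and track the two summands of the coefficient separately, whereas the paper computes matrix entries and substitutes the coefficient formula at the end), which does not change the substance.
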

\begin{proof}
We will show that for every fixed $n\geq0$, Equation (\ref{eqn:3.31})
holds for all $m\geq0$. The proof of this statment will proceed by
induction on $n$. The base case $n=0$ is left as an easy exercise
for the reader. The remainder of the proof is devoted to the induction
step. Fix $n\geq0$ and assume that Equation (\ref{eqn:3.31}) holds
for this $n$ and for all $m\geq0$. We will prove the analogous statement
for $n+1$.

From Equation (\ref{eqn:approximant}), it is immediate that \[
A_{n+1}\mid_{\cF_{m}}=A_{n}\mid_{\cF_{m}}=T\mid_{\cF_{m}},\quad\forall m\leq n.\]
 Thus it remains to fix $m\geq n+1$ and verify that \[
A_{n+1}\mid{}_{\cF_{m}}=(T\mid{}_{\cF_{n+1}})\otimes I_{m-n-1}\in B(\cF_{m}).\]
 In light of how $(T\mid{}_{\cF_{n+1}})\otimes I_{m-n-1}$ acts on
the canonical basis of $\cF_{m}$, this amounts to showing that for
every $1\leq k_{1},\ldots,k_{m},\ell_{1},\ldots,\ell_{m}\leq d$,
one has\begin{multline}
\langle A_{n+1}(\xi_{\ell_{1}}\otimes\cdots\otimes\xi_{\ell_{m}}),\xi_{k_{1}}\otimes\cdots\otimes\xi_{k_{m}}\rangle\\
=\delta_{k_{n+2},\ell_{n+2}}\cdots\delta_{k_{m},\ell_{m}}\langle T(\xi_{\ell_{1}}\otimes\cdots\otimes\xi_{\ell_{n+1}}),\xi_{k_{1}}\otimes\cdots\otimes\xi_{k_{n+1}}\rangle.\label{eqn:lem-approx-must-show}\end{multline}

On the left-hand side of Equation (\ref{eqn:lem-approx-must-show})
we substitute for $A_{n+1}$ using the recursive definition given
by Equation (\ref{eqn:approximant}). This gives\begin{multline}
\langle A_{n+1}(\xi_{\ell_{1}}\otimes\cdots\otimes\xi_{\ell_{m}}),\xi_{k_{1}}\otimes\cdots\otimes\xi_{k_{m}}\rangle\\
=\langle A_{n}\xi_{l_{1}}\otimes\cdots\otimes\xi_{l_{m}}),\xi_{k_{1}}\otimes\cdots\xi_{k_{m}}\rangle\\
+\sum_{\begin{array}{c}
{\scriptstyle i_{1},\ldots,i_{n+1}}\\
{\scriptstyle j_{1},\ldots,j_{n+1}}\end{array}}c_{i_{1},\ldots,i_{n+1};j_{1},\ldots,j_{n+1}}\alpha(i_{1},\ldots,i_{n+1};j_{1},\ldots,j_{n+1}),\label{eqn:3.60}\end{multline}
where for every $1\leq i_{1},\ldots i_{n+1},j_{1},\ldots,j_{n+1}\leq d$,
we have written\begin{multline*}
\alpha(i_{1},\ldots,i_{n+1};j_{1},\ldots,j_{n+1})\\
=\langle\bigl(L_{i_{1}}\cdots L_{i_{n+1}}\bigr)\bigl(L_{j_{1}}\cdots L_{j_{n+1}}\bigr)^{*}(\xi_{\ell_{1}}\otimes\cdots\otimes\xi_{\ell_{m}}),(\xi_{k_{1}}\otimes\cdots\otimes\xi_{k_{m}})\rangle.\end{multline*}
It is clear that an inner product like the one just written simplifies
as follows: \begin{align*}
\langle & \bigl(L_{i_{1}}\cdots L_{i_{n+1}}\bigr)\bigl(L_{j_{1}}\cdots L_{j_{n+1}}\bigr)^{*}(\xi_{\ell_{1}}\otimes\cdots\otimes\xi_{\ell_{m}}),(\xi_{k_{1}}\otimes\cdots\otimes\xi_{k_{m}})\rangle\\
 & =\langle\bigl(L_{j_{1}}\cdots L_{j_{n+1}}\bigr)^{*}(\xi_{\ell_{1}}\otimes\cdots\otimes\xi_{\ell_{m}}),(L_{i_{1}}\cdots L_{i_{n+1}})^{*}(\xi_{k_{1}}\otimes\cdots\otimes\xi_{k_{m}})\rangle\\
 & =\delta_{i_{1},k_{1}}\cdots\delta_{i_{n+1},k_{n+1}}\delta_{j_{1},\ell_{1}}\cdots\delta_{j_{n+1},\ell_{n+1}}\langle\xi_{\ell_{n+2}}\otimes\cdots\otimes\xi_{\ell_{m}},\xi_{k_{n+2}}\otimes\cdots\otimes\xi_{k_{m}}\rangle\\
 & =\delta_{i_{1},k_{1}}\cdots\delta_{i_{n+1},k_{n+1}}\delta_{j_{1},\ell_{1}}\cdots\delta_{j_{n+1},\ell_{n+1}}\delta_{\ell_{n+2},k_{n+2}}\cdots\delta_{\ell_{m},k_{m}}.\end{align*}
 Thus in the sum on the right-hand side of Equation (\ref{eqn:3.60}),
the only term that survives is the one corresponding to $i_{1}=k_{1},\ldots,i_{n+1}=k_{n+1}$
and $j_{1}=\ell_{1},\ldots,j_{n+1}=\ell_{n+1}$, and we obtain that\begin{multline}
\langle A_{n+1}(\xi_{\ell_{1}}\otimes\cdots\otimes\xi_{\ell_{m}}),\xi_{k_{1}}\otimes\cdots\otimes\xi_{k_{m}}\rangle\\
=\langle A_{n}(\xi_{\ell_{1}}\otimes\cdots\otimes\xi_{\ell_{m}}),\xi_{k_{1}}\otimes\cdots\otimes\xi_{k_{m}}\rangle\\
+\delta_{\ell_{n+2},k_{n+2}}\cdots\delta_{\ell_{m},k_{m}}c_{k_{1},\ldots,k_{n+1};\ell_{1},\ldots,\ell_{n+1}}.\label{eqn:3.70}\end{multline}
Finally, we remember our induction hypothesis, which gives\begin{multline}
\langle A_{n}(\xi_{\ell_{1}}\otimes\cdots\otimes\xi_{\ell_{m}}),\xi_{k_{1}}\otimes\cdots\otimes\xi_{k_{m}}\rangle\\
=\delta_{k_{n+1},\ell_{n+1}}\cdots\delta_{k_{m},\ell_{m}}\langle T(\xi_{\ell_{1}}\otimes\cdots\otimes\xi_{\ell_{n}}),\xi_{k_{1}}\otimes\cdots\otimes\xi_{k_{n}}\rangle.\label{eqn:3.80}\end{multline}
A straightforward calculation shows that if we substitute Equation
(\ref{eqn:3.80}) into Equation (\ref{eqn:3.70}) and use Formula
(\ref{eqn:approximant-coeff}) which defines the coefficient \linebreak$c_{k_{1},\ldots,k_{n+1};\ell_{1},\ldots,\ell_{n+1}}$,
then we arrive at the right-hand side of Equation (\ref{eqn:lem-approx-must-show}).
This completes the induction argument.\end{proof}
\begin{lem}
\label{lem:diff-approximants}Let $T\in\cB$ be a block-diagonal operator,
and let $(A_{n})_{n=1}^{\infty}$ be the sequence of $\mathcal{C}$-approximants
for $T$, as in Definition \ref{def:approximants}. Then for every
$n\geq1$, \begin{equation}
\|A_{n+1}-A_{n}\|=\|T\mid{}_{\cF_{n+1}}-(T\mid{}_{\cF_{n}})\otimes I\|.\label{eqn:diff-approximants}\end{equation}
 \end{lem}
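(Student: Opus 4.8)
The plan is to reduce the operator-norm identity to a block-by-block computation, using the explicit description of the approximants supplied by Lemma \ref{lem:approximants-work}, and then to collapse the resulting supremum of norms to a single value. By Definition \ref{def:approximants} each $A_{n}$ is block-diagonal, so $A_{n+1}-A_{n}$ is block-diagonal as well; consequently its norm is $\sup_{m\geq0}\|(A_{n+1}-A_{n})\mid_{\cF_{m}}\|$, and it suffices to understand each restriction separately.

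First I would compute $(A_{n+1}-A_{n})\mid_{\cF_{m}}$ for each $m$ by substituting the two cases of Equation (\ref{eqn:3.31}). For $m\leq n$ both $A_{n+1}$ and $A_{n}$ restrict to $T\mid_{\cF_{m}}$, so these blocks cancel. For $m\geq n+1$ the lemma gives $A_{n+1}\mid_{\cF_{m}}=(T\mid_{\cF_{n+1}})\otimes I_{m-n-1}$ and $A_{n}\mid_{\cF_{m}}=(T\mid_{\cF_{n}})\otimes I_{m-n}$. Using the associativity of the tensor decomposition $\cF_{m}=\cF_{n+1}\otimes(\bC^{d})^{\otimes(m-n-1)}$ to rewrite $(T\mid_{\cF_{n}})\otimes I_{m-n}=\bigl((T\mid_{\cF_{n}})\otimes I\bigr)\otimes I_{m-n-1}$, I obtain
$$(A_{n+1}-A_{n})\mid_{\cF_{m}}=D\otimes I_{m-n-1},\qquad D:=T\mid_{\cF_{n+1}}-(T\mid_{\cF_{n}})\otimes I,$$
valid for every $m\geq n+1$ (with $I_{0}$ interpreted as a scalar when $m=n+1$).

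Finally, since tensoring an operator with an identity does not change its operator norm, $\|D\otimes I_{m-n-1}\|=\|D\|$ for every $m\geq n+1$. The vanishing blocks $m\leq n$ contribute nothing to the supremum, so $\|A_{n+1}-A_{n}\|=\sup_{m\geq n+1}\|D\otimes I_{m-n-1}\|=\|D\|$, which is exactly the asserted equality. The one point demanding care --- and the only real content beyond bookkeeping --- is the re-association of the tensor factors that exposes the common trailing identity $I_{m-n-1}$ across all blocks $m\geq n+1$; once that is in hand, the norm-preservation of $\otimes\,I$ makes the supremum independent of $m$ and the identity follows immediately.
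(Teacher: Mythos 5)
Your proof is correct and follows essentially the same route as the paper: both reduce $\|A_{n+1}-A_{n}\|$ to a supremum of block norms via Lemma \ref{lem:approximants-work}, observe that the blocks with $m\leq n$ vanish, and re-associate the tensor factors for $m\geq n+1$ to exhibit each remaining block as $\bigl(T\mid_{\cF_{n+1}}-(T\mid_{\cF_{n}})\otimes I\bigr)\otimes I_{m-n-1}$, whose norm is independent of $m$. The only cosmetic difference is that the paper treats $m=n+1$ and $m>n+1$ as separate cases, whereas you merge them by reading $I_{0}$ as a scalar.
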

\begin{proof}
Note that since $A_{n+1}-A_{n}$ is block-diagonal, \[
\|A_{n+1}-A_{n}\|=\sup_{m\geq0}\|A_{n+1}\mid_{\cF_{m}}-A_{n}\mid_{\cF_{m}}\|.\]
 To compute this supremum, there are three cases to consider. In each
case we apply Lemma \ref{lem:approximants-work}. First, for $m\leq n$,
\[
\|A_{n+1}\mid_{\cF_{m}}-A_{n}\mid_{\cF_{m}}\|=0.\]
 Next, for $m=n+1$, \[
\|A_{n+1}\mid_{\cF_{n+1}}-A_{n}\mid_{\cF_{n+1}}\|=\|T\mid_{\cF_{n+1}}-(T\mid_{\cF_{n}})\otimes I\|.\]
 Finally, for $m>n+1$, \begin{eqnarray*}
\|A_{n+1}\mid_{\cF_{m}}-A_{n}\mid_{\cF_{m}}\| & = & \|(T\mid_{\cF_{n+1}})\otimes I_{m-n-1}-(T\mid_{\cF_{n}})\otimes I_{m-n}\|\\
 & = & \|(T\mid_{\cF_{n+1}}-(T\mid_{\cF_{n}})\otimes I)\otimes I_{m-n-1}\|\\
 & = & \|T\mid_{\cF_{n+1}}-(T\mid_{\cF_{n}})\otimes I\|.\end{eqnarray*}
 This makes it clear that the supremum over all $m\geq0$ is equal
to the right hand side of Equation (\ref{eqn:diff-approximants}),
as required. \end{proof}
\begin{lem}
\label{lem:diff-approximants-inclusion} Let $T$ be a block-diagonal
operator. If $T$ satisfies \[
\sum_{n=1}^{\infty}\|(T\mid_{\cF_{n+1}})-(T\mid_{\cF_{n}})\otimes I\|<\infty,\]
 then $T\in\cC$. \end{lem}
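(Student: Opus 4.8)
The plan is to show that the sequence of $\cC$-approximants $(A_n)_{n=0}^{\infty}$ for $T$, introduced in Definition \ref{def:approximants}, converges in operator norm to $T$, and then to exploit the fact that $\cC$ is norm-closed. First I would observe that every approximant $A_n$ lies in $\cC$. Indeed, by the recursive formula (\ref{eqn:approximant}) each $A_n$ is a finite linear combination of operators of the form $(L_{i_1}\cdots L_{i_{n+1}})(L_{j_1}\cdots L_{j_{n+1}})^*$ together with a scalar multiple of $I_{\cF}$; these products clearly lie in $\cC$, and $I_{\cF}\in\cC$ as well, since each $L_j$ is an isometry (so that $I_{\cF}=L_1^*L_1$). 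Hence $A_n\in\cC$ for every $n\geq0$.

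Next I would reinterpret the hypothesis as a summability statement about the consecutive differences $A_{n+1}-A_n$. By Lemma \ref{lem:diff-approximants}, for every $n\geq1$ one has
\[
\|A_{n+1}-A_n\|=\|(T\mid_{\cF_{n+1}})-(T\mid_{\cF_n})\otimes I\|,
\]
so the assumed convergence of $\sum_{n=1}^{\infty}\|(T\mid_{\cF_{n+1}})-(T\mid_{\cF_n})\otimes I\|$ is precisely the statement that $\sum_{n=1}^{\infty}\|A_{n+1}-A_n\|<\infty$ (the single additional term $\|A_1-A_0\|$ is finite and irrelevant to convergence). Consequently the telescoping series $\sum_k(A_{k+1}-A_k)$ converges absolutely in the Banach space $B(\cF)$, and therefore the sequence $(A_n)$ is norm-Cauchy. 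Writing $A:=\lim_n A_n\in B(\cF)$ and using that $\cC$, being a $\mathrm{C}^*$-algebra, is norm-closed, I conclude that $A\in\cC$.

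Finally I would identify the limit $A$ with $T$. By Lemma \ref{lem:approximants-work}, for each fixed $m\geq0$ and all $n\geq m$ we have $A_n\mid_{\cF_m}=T\mid_{\cF_m}$; passing to the norm limit in $n$ yields $A\mid_{\cF_m}=T\mid_{\cF_m}$ for every $m\geq0$. Since $A$ is a norm limit of block-diagonal operators it is itself block-diagonal, and $T$ is block-diagonal by hypothesis; two block-diagonal operators that agree on every summand $\cF_m$ must coincide, so $A=T$. Therefore $T=A\in\cC$, as required.

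I do not anticipate a genuine obstacle: this is a clean completeness-and-telescoping argument in which Lemmas \ref{lem:approximants-work} and \ref{lem:diff-approximants} do the heavy lifting. The only points demanding slight care are confirming that $\cC$ contains the identity (so that the base approximant $A_0$ really lies in $\cC$) and verifying that agreement on each block $\cF_m$ upgrades to equality of the block-diagonal operators $A$ and $T$ on all of $\cF$.
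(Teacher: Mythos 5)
Your argument is correct and is essentially identical to the paper's proof: both convert the hypothesis into $\sum_n\|A_{n+1}-A_n\|<\infty$ via Lemma \ref{lem:diff-approximants}, obtain a norm limit $A\in\cC$ since $\cC$ is closed, and identify $A=T$ blockwise using Lemma \ref{lem:approximants-work}. The extra details you supply (membership of $A_n$ in $\cC$ and the blockwise identification of $A$ with $T$) are fine and consistent with what the paper leaves implicit.
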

\begin{proof}
Let $(A_{n})_{n=1}^{\infty}$ be the sequence of $\mathcal{C}$-approximants
for $T$, as in Definition \ref{def:approximants}. In view of Lemma
\ref{lem:diff-approximants}, the hypothesis of the present lemma
implies that the sum $\sum_{n=1}^{\infty}\|A_{n+1}-A_{n}\|$ is finite.
This in turn implies that the sequence $(A_{n})_{n=1}^{\infty}$ converges
in norm to an operator $A$. Since each $A_{n}$ belongs to $\cC$,
it follows that $A$ belongs to $\cC$. But we must have $A=T$, as
Lemma \ref{lem:approximants-work} implies that \[
A\mid_{\cF_{m}}=\lim_{n\to\infty}A_{n}\mid_{\cF_{m}}=T\mid_{\cF_{m}},\ \ \forall\, m\geq0.\]
 Hence $T\in\cC$, as required. \end{proof}
\begin{prop}
\label{pro:inclusion-criterion} Let $T$ be a block-diagonal operator.
If the block-diagonal operator $T-\sum_{i=1}^{d}R_{i}TR_{i}^{*}$
belongs to the ideal $\cS$, then $T\in\cC$. \end{prop}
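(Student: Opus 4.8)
The plan is to reduce the statement to Lemma \ref{lem:diff-approximants-inclusion}, whose hypothesis is a summability condition on the differences $T\mid_{\cF_{n+1}}-(T\mid_{\cF_n})\otimes I$. The entire content therefore lies in identifying the block-diagonal operator $\sum_{i=1}^{d}R_i T R_i^{*}$ concretely on each homogeneous component $\cF_n$.

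First I would compute the action of $\sum_{i=1}^{d}R_i T R_i^{*}$ on $\cF_n$. Since we are in the non-deformed case $q=0$, the annihilation formula (\ref{eqn:annihilation}) collapses (only the $m=n$ term survives) to $R_i^{*}(\xi_{i_1}\otimes\cdots\otimes\xi_{i_n})=\delta_{i_n,i}\,\xi_{i_1}\otimes\cdots\otimes\xi_{i_{n-1}}$; that is, $R_i^{*}$ simply strips off the last tensor component, and it kills $\Omega$. Writing a generic vector of $\cF_n$ (for $n\geq 1$) as $\sum_k \zeta_k\otimes\xi_k$ with $\zeta_k\in\cF_{n-1}$, I would apply $R_i^{*}$, then $T$ (which, being block-diagonal, acts within $\cF_{n-1}$), then $R_i$ (which reattaches $\xi_i$ on the right), and finally sum over $i$. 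The Kronecker factor $\delta_{i,k}$ forces $i=k$, and the result collapses to $\sum_k (T\zeta_k)\otimes\xi_k$. Hence $\bigl(\sum_{i=1}^{d}R_i T R_i^{*}\bigr)\mid_{\cF_n}=(T\mid_{\cF_{n-1}})\otimes I$ for every $n\geq 1$, while on $\cF_0$ the operator vanishes because $R_i^{*}\Omega=0$.

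Consequently, setting $S:=T-\sum_{i=1}^{d}R_i T R_i^{*}$, I obtain $S\mid_{\cF_n}=T\mid_{\cF_n}-(T\mid_{\cF_{n-1}})\otimes I$ for every $n\geq 1$. The hypothesis $S\in\cS$ says precisely that $\sum_{n=0}^{\infty}\|S\mid_{\cF_n}\|<\infty$; discarding the $n=0$ term and reindexing by $k=n-1$, this is exactly $\sum_{k=0}^{\infty}\|T\mid_{\cF_{k+1}}-(T\mid_{\cF_k})\otimes I\|<\infty$, which in particular gives the summability hypothesis of Lemma \ref{lem:diff-approximants-inclusion}. Applying that lemma yields $T\in\cC$, as required. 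I expect the only genuine step to be the component-wise computation of $\sum_{i=1}^{d}R_i T R_i^{*}$; everything else is a matter of matching the resulting condition to the lemma, the sole point deserving care being the harmless reindexing at level $n=0$ (dropping finitely many finite terms does not affect convergence).
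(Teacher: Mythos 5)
Your proof is correct and follows essentially the same route as the paper's: both identify $\bigl(\sum_{i=1}^{d}R_{i}TR_{i}^{*}\bigr)\mid_{\cF_{n}}=(T\mid_{\cF_{n-1}})\otimes I$ for $n\geq1$ and then invoke Lemma \ref{lem:diff-approximants-inclusion}. The only difference is that you carry out explicitly the component-wise computation that the paper leaves as ``easy to verify.''
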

\begin{proof}
The hypothesis is equivalent to \begin{equation}
\sum_{n=1}^{\infty}\|(T-{\textstyle \sum}_{i=1}^{d}R_{i}TR_{i}^{*})\mid_{\cF_{n}}\|<\infty.\label{eqn:3.51}\end{equation}
 It's easy to verify that for $n\geq1$, \[
({\textstyle \sum}_{i=1}^{d}R_{i}TR_{i}^{*})\mid_{\cF_{n}}=(T\mid_{\cF_{n-1}})\otimes I,\]
 which gives \[
\|(T-{\textstyle \sum}_{i=1}^{d}R_{i}TR_{i}^{*})\mid_{\cF_{n}}\|=\|T\mid_{\cF_{n}}-(T\mid_{\cF_{n-1}})\otimes I\|.\]
 Therefore, (\ref{eqn:3.51}) implies that the hypothesis of Lemma
\ref{lem:diff-approximants-inclusion} holds, and the result follows
by applying the said lemma. \end{proof}
\begin{cor}
\label{cor:inclusion-criterion}Let $T\in\cB$ be a block-diagonal
operator such that $[T,R_{i}^{*}]\in\cS$ for $1\leq i\leq d$. Then
$T\in\cC$.\end{cor}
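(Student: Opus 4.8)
The plan is to reduce this corollary directly to Proposition \ref{pro:inclusion-criterion}. That proposition guarantees $T\in\cC$ as soon as the block-diagonal operator $T-\sum_{i=1}^{d}R_{i}TR_{i}^{*}$ lies in the ideal $\cS$, so the entire task is to deduce this membership from the hypothesis that each commutator $[T,R_{i}^{*}]$ lies in $\cS$. Note first that the difference $T-\sum_{i=1}^{d}R_{i}TR_{i}^{*}$ is indeed block-diagonal, since $T$ is block-diagonal, $R_{i}^{*}$ is $1$-lowering and $R_{i}$ is $1$-raising, so that each $R_{i}TR_{i}^{*}$ maps $\cF_{n}$ into $\cF_{n}$.

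The key step is a purely algebraic rearrangement. Writing $TR_{i}^{*}=R_{i}^{*}T+[T,R_{i}^{*}]$ and multiplying on the left by $R_{i}$ gives $R_{i}TR_{i}^{*}=R_{i}R_{i}^{*}T+R_{i}[T,R_{i}^{*}]$. Summing over $i$ and invoking Equation (\ref{eqn:row-projection}), which says $\sum_{i=1}^{d}R_{i}R_{i}^{*}=1-P_{0}$, I obtain
\[
\sum_{i=1}^{d}R_{i}TR_{i}^{*}=(1-P_{0})T+\sum_{i=1}^{d}R_{i}[T,R_{i}^{*}],
\]
and hence, after cancelling the $(1-P_0)T$ term against $T$,
\[
T-\sum_{i=1}^{d}R_{i}TR_{i}^{*}=P_{0}T-\sum_{i=1}^{d}R_{i}[T,R_{i}^{*}].
\]
It then remains only to check that the right-hand side lies in $\cS$.

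For the first term, $P_{0}T$ is supported on $\cF_{0}$: since $T$ is block-diagonal, $T(\cF_{n})\subseteq\cF_{n}$, so $P_{0}T\mid_{\cF_{n}}=0$ for $n\geq1$, whence $P_{0}T$ is trivially block-diagonal and summable, i.e. $P_{0}T\in\cS$. For the second term, each $R_{i}$ is a $1$-raising operator and hence lies in $\cB$, while $[T,R_{i}^{*}]\in\cS$ by hypothesis; since $\cS$ is a two-sided ideal of $\cB$, each product $R_{i}[T,R_{i}^{*}]$ lies in $\cS$, and therefore so does their finite sum. Consequently $T-\sum_{i=1}^{d}R_{i}TR_{i}^{*}\in\cS$, and Proposition \ref{pro:inclusion-criterion} yields $T\in\cC$.

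I expect no serious obstacle in this deduction; the only points requiring care are the bookkeeping that keeps every operator appearing band-limited, so that the ideal property of $\cS$ inside $\cB$ may be applied, and the observation that $P_{0}T$ drops out as a harmless finite-block term rather than contributing to the tail of the summability condition.
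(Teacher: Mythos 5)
Your argument is correct and is essentially identical to the paper's proof: both use the identity $\sum_{i=1}^{d}R_{i}R_{i}^{*}=1-P_{0}$ to rewrite $T-\sum_{i=1}^{d}R_{i}TR_{i}^{*}$ as $P_{0}T-\sum_{i=1}^{d}R_{i}[T,R_{i}^{*}]$, observe that each term lies in $\cS$ by the ideal property, and then invoke Proposition \ref{pro:inclusion-criterion}. No discrepancies to report.
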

\begin{proof}
By Proposition \ref{pro:inclusion-criterion}, it suffices to show
that $T-\sum_{i=1}^{d}R_{i}TR_{i}^{*}\in\cS$. We can write \begin{eqnarray*}
T-{\textstyle \sum_{i=1}^{d}}R_{i}TR_{i}^{*} & = & (P_{0}+{\textstyle \sum_{i=1}^{d}}R_{i}R_{i}^{*})T-{\textstyle \sum_{i=1}^{d}}R_{i}TR_{i}^{*}\\
 & = & P_{0}T-{\textstyle \sum_{i=1}^{d}}R_{i}[T,R_{i}^{*}],\end{eqnarray*}
 where $P_{0}$ is the orthogonal projection onto $\cF_{0}$, and
where we have used Equation (\ref{eqn:row-projection}). Since $P_{0}$
and $[T,R_{i}^{*}]$ belong to $\cS$, and since $T$ and $R_{i}$
belong to $\cB$, the result follows from the fact that $\cS$ is
a two-sided ideal of $\cB$. 
\end{proof}
We now apply the above results on block-diagonal operators in order
to bootstrap the case of general band-limited operators. It is convenient
to first consider the case of $k$-raising/lowering operators, which
were introduced in Definition \ref{def:types-of-band-ops}.
\begin{prop}
\label{prop:inclusion-criterion-for-raising-lowering} Let $T\in\cB$
be a $k$-raising or $k$-lowering operator for some $k\geq0$. If
$T$ satisfies \textup{$[T,R_{j}^{*}]\in\cS$ for $1\leq j\leq d$,
then $T\in\cS$.} \end{prop}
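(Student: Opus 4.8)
The plan is to reduce both the $k$-raising and the $k$-lowering cases to the block-diagonal Corollary~\ref{cor:inclusion-criterion} by factoring $T$ through words in the left creation operators; the case $k=0$ is literally that corollary. Consider first the $k$-raising case. For a word $w=(w_{1},\dots,w_{k})$ with $1\le w_{1},\dots,w_{k}\le d$ write $L_{w}=L_{w_{1}}\cdots L_{w_{k}}\in\cC$, and set $B_{w}:=L_{w}^{*}T$. Since $L_{w}^{*}$ is $k$-lowering and $T$ is $k$-raising, each $B_{w}$ is a block-diagonal element of $\cB$. Iterating Equation~(\ref{eqn:row-projection}) shows that $\sum_{|w|=k}L_{w}L_{w}^{*}$ is the orthogonal projection onto $\bigoplus_{m\ge k}\cF_{m}$; as the range of the $k$-raising operator $T$ is contained in this subspace, we obtain $\big(\sum_{|w|=k}L_{w}L_{w}^{*}\big)T=T$, i.e. the finite factorization $T=\sum_{|w|=k}L_{w}B_{w}$.

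The core of the argument is to place each block-diagonal $B_{w}$ in $\cC$ via Corollary~\ref{cor:inclusion-criterion}, which requires checking $[B_{w},R_{i}^{*}]\in\cS$ for every $i$. Expanding with the Leibniz rule gives $[B_{w},R_{i}^{*}]=L_{w}^{*}[T,R_{i}^{*}]+[L_{w}^{*},R_{i}^{*}]T$. The decisive simplification is that left annihilation operators commute with right annihilation operators, so $[L_{w}^{*},R_{i}^{*}]=0$ and only the first term survives. By hypothesis $[T,R_{i}^{*}]\in\cS$, and since $L_{w}^{*}\in\cB$ while $\cS$ is a two-sided ideal of $\cB$, we conclude $[B_{w},R_{i}^{*}]=L_{w}^{*}[T,R_{i}^{*}]\in\cS$. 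Corollary~\ref{cor:inclusion-criterion} then yields $B_{w}\in\cC$, whence $T=\sum_{|w|=k}L_{w}B_{w}\in\cC$.

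The $k$-lowering case is symmetric, factoring on the right: put $B_{w}:=TL_{w}$ (again block-diagonal) and use that a $k$-lowering operator annihilates $\cF_{0},\dots,\cF_{k-1}$ to rewrite $T=T\big(\sum_{|w|=k}L_{w}L_{w}^{*}\big)=\sum_{|w|=k}B_{w}L_{w}^{*}$. Here $[B_{w},R_{i}^{*}]=T[L_{w},R_{i}^{*}]+[T,R_{i}^{*}]L_{w}$, and one computes from the elementary commutator $[L_{a},R_{i}^{*}]=-\delta_{a,i}P_{0}$ that $[L_{w},R_{i}^{*}]=-\delta_{w_{k},i}\,L_{w_{1}}\cdots L_{w_{k-1}}P_{0}$ is a rank-one operator supported at the vacuum, hence lies in $\cS$. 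The ideal property of $\cS$ again gives $[B_{w},R_{i}^{*}]\in\cS$, so each $B_{w}\in\cC$ and therefore $T=\sum_{|w|=k}B_{w}L_{w}^{*}\in\cC$.

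I expect the main obstacle to be the bookkeeping in the $k$-lowering case, where $[L_{w}^{*},R_{i}^{*}]=0$ is no longer available and one must instead propagate the finite-rank commutator $[L_{a},R_{i}^{*}]=-\delta_{a,i}P_{0}$ through the product $L_{w}=L_{w_{1}}\cdots L_{w_{k}}$ and verify that every resulting term except the one collapsing onto the vacuum vanishes, so that the surviving operator lands in $\cS$. The $k$-raising case is cleaner precisely because the mixed commutator $[L_{w}^{*},R_{i}^{*}]$ vanishes identically, reducing the whole verification to the ideal property of $\cS$ together with the factorization provided by Equation~(\ref{eqn:row-projection}).
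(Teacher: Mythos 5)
Your proof is correct and follows essentially the same route as the paper: factor $T$ through words of length $k$ in the left creation operators, use the ideal property of $\cS$ (and the vanishing of $[L_{w}^{*},R_{j}^{*}]$ in the raising case) to verify the hypothesis of Corollary~\ref{cor:inclusion-criterion} for the block-diagonal pieces, and reassemble $T$ via the projection $\sum_{|w|=k}L_{w}L_{w}^{*}$. The paper dispatches the $k$-lowering case with only the remark that it is ``handled in a similar way by considering the operators $TL_{i_{1}}\cdots L_{i_{k}}$,'' and your computation $[L_{w},R_{i}^{*}]=-\delta_{w_{k},i}\,L_{w_{1}}\cdots L_{w_{k-1}}P_{0}\in\cS$ correctly supplies exactly the detail that sketch omits; note also that both your argument and the paper's prove $T\in\cC$, which is the conclusion actually used later, the statement's ``$T\in\cS$'' being evidently a typo.
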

\begin{proof}
First, suppose that $T$ is $k$-raising. For every $1\leq i_{1},\ldots,i_{k}\leq d$,
the fact that the left and right annihilation operators commute implies
that \[
[(L_{i_{1}}\dots L_{i_{k}})^{*}T,R_{j}^{*}]=(L_{i_{1}}\dots L_{i_{k}})^{*}[T,R_{j}^{*}],\quad\forall1\leq j\leq d.\]
 Since $[T,R_{j}^{*}]\in\cS$ by hypothesis, and since $\cS$ is a
two-sided ideal of $\cB$, it follows that $[(L_{i_{1}}\dots L_{i_{k}})^{*}T,R_{j}^{*}]\in\cS$.
The operator $(L_{i_{1}}\dots L_{i_{k}})^{*}T$ is block-diagonal,
hence Corollary \ref{cor:inclusion-criterion} gives $(L_{i_{1}}\dots L_{i_{k}})^{*}T\in\cC$.

Since $T$ is $k$-raising, the range of $T$ is orthogonal to the
subspace $\cF_{\ell}$ whenever $\ell<k$. This implies that \[
\Bigl(I-\sum_{1\leq i_{1},\ldots,i_{k}\leq d}L_{i_{1}}\cdots L_{i_{k}}(L_{i_{1}}\cdots L_{i_{k}})^{*}\Bigr)T=0.\]
 Hence \[
T=\sum_{1\leq i_{1},\ldots,i_{k}\leq d}L_{i_{1}}\cdots L_{i_{k}}\bigl((L_{i_{1}}\cdots L_{i_{k}})^{*}T\bigr),\]
 and it follows that $T\in\cC$.

The case when $T$ is $k$-lowering is handled in a similar way by
considering the operators $TL_{i_{1}}\dots L_{i_{k}}$ for every $1\leq i_{1},\ldots,i_{k}\leq d$. \end{proof}
\begin{thm}
\label{thm:inclusion-criterion} Let $T\in\cB$ be an operator such
that either $[T,R_{j}^{*}]\in\cS$ for all $1\leq j\leq d$, or $[T,R_{j}]\in\cS$
for all $1\leq j\leq d$. Then $T\in\cC$. \end{thm}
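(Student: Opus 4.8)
The plan is to reduce the general band-limited case to the $k$-raising/lowering case already handled in Proposition \ref{prop:inclusion-criterion-for-raising-lowering}, using the Fourier-type decomposition of Proposition \ref{prop:fourier-decomp}. The key observation is that the hypothesis $[T,R_j^*]\in\cS$ (respectively $[T,R_j]\in\cS$) should pass to each homogeneous component $X_k$ and $Y_k$ in the decomposition $T=\sum_{k=0}^b X_k + \sum_{k=1}^b Y_k$, because taking the commutator with $R_j^*$ (a $1$-lowering operator) or $R_j$ (a $1$-raising operator) interacts predictably with the raising/lowering grading.

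First I would treat the case $[T,R_j^*]\in\cS$ for all $j$. Since $T\in\cB$, fix a band limit $b$ and write the Fourier decomposition from Proposition \ref{prop:fourier-decomp}, so that $[T,R_j^*]=\sum_{k=0}^b[X_k,R_j^*]+\sum_{k=1}^b[Y_k,R_j^*]$. Because $R_j^*$ is a $1$-lowering operator, each $[X_k,R_j^*]$ is a $(k-1)$-raising operator and each $[Y_k,R_j^*]$ is a $(k+1)$-lowering operator; in particular these commutators live in distinct ``frequency slots'' of the grading on band-limited operators. The crucial step is to argue that an element of $\cS$ decomposes into its own raising/lowering pieces which are each individually in $\cS$ — this is precisely the summability half of Proposition \ref{prop:fourier-decomp} applied to $[T,R_j^*]\in\cS\subseteq\cB$. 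Matching up frequency slots then forces each individual commutator $[X_k,R_j^*]$ and $[Y_k,R_j^*]$ to lie in $\cS$ separately.

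Once that separation is established, I would apply Proposition \ref{prop:inclusion-criterion-for-raising-lowering} to each summand: every $X_k$ is a $k$-raising operator in $\cB$ satisfying $[X_k,R_j^*]\in\cS$, hence $X_k\in\cS\subseteq\cC$, and likewise each $Y_k$ is a $k$-lowering operator with $[Y_k,R_j^*]\in\cS$, hence $Y_k\in\cC$. Since $T$ is the finite sum of these pieces, we conclude $T\in\cC$. The case $[T,R_j]\in\cS$ is entirely parallel: here $R_j$ is $1$-raising, so $[X_k,R_j]$ is $(k+1)$-raising and $[Y_k,R_j]$ is $(k-1)$-lowering, and the same frequency-matching argument together with Proposition \ref{prop:inclusion-criterion-for-raising-lowering} (whose hypothesis can be transposed via adjoints, noting $\cS$ and $\cC$ are self-adjoint) yields the conclusion.

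The step I expect to be the main obstacle is the frequency-matching argument, namely cleanly justifying that if a sum of operators with \emph{distinct} raising/lowering degrees lies in $\cS$, then each summand lies in $\cS$. This rests on the uniqueness and summability clauses of the Fourier decomposition: one must verify that the homogeneous components of $[T,R_j^*]$ produced by grouping the $[X_k,R_j^*]$ and $[Y_k,R_j^*]$ by degree coincide with the canonical $X$'s and $Y$'s of $[T,R_j^*]$ itself, and then invoke that these canonical pieces are summable. The bookkeeping is slightly delicate because a given degree slot may in principle receive contributions from both an $X$-commutator and a $Y$-commutator; I would need to check that the degrees $k-1$ (from $X_k$) and $k+1$ (from $Y_k$) never collide in a way that mixes a raising piece with a lowering piece of the same magnitude, which holds once one keeps the raising and lowering families tracked separately rather than by unsigned degree alone.
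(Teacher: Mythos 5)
Your proposal is correct and follows essentially the same route as the paper: decompose $T$ via Proposition \ref{prop:fourier-decomp}, use the uniqueness and summability clauses of that proposition to isolate each commutator $[X_k,R_j^*]$ and $[Y_k,R_j^*]$ in $\cS$ (the degree bookkeeping, including the edge case that $[X_0,R_j^*]$ lands in the $1$-lowering slot, works out exactly as you anticipate), then apply Proposition \ref{prop:inclusion-criterion-for-raising-lowering} termwise and handle the $[T,R_j]$ case by passing to adjoints. The only cosmetic point is that the relevant conclusion of Proposition \ref{prop:inclusion-criterion-for-raising-lowering} is $T\in\cC$ rather than $T\in\cS$.
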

\begin{proof}
First, suppose that $T$ satisfies $[T,R_{j}^{*}]\in\cS$ for every
$1\leq j\leq d$. Let $b\geq0$ be a band-limit for $T$. By Proposition
\ref{prop:fourier-decomp}, we can decompose $T$ as \[
T=\sum_{k=0}^{b}X_{k}+\sum_{k=1}^{b}Y_{k},\]
 where each $X_{k}$ is a $k$-raising operator, and each $Y_{k}$
is a $k$-lowering operator. We will prove that each $X_{k}\in\cC$
and each $Y_{k}\in\cC$.

Fix for the moment $1\leq j\le d$. We have \begin{align}
[T,R_{j}^{*}] & =\sum_{k=0}^{b}[X_{k},R_{j}^{*}]+\sum_{k=1}^{b}[Y_{k},R_{j}^{*}]\nonumber \\
 & =\sum_{k=0}^{b+1}X_{k}'+\sum_{k=0}^{b+1}Y_{k}',\label{eqn:3.81}\end{align}
 where \[
X_{k}'=\begin{cases}
[X_{k+1},R_{j}^{*}] & \mbox{if }0\leq k\leq b-1,\\
0 & \mbox{if }k=b\mbox{ or }k=b+1,\end{cases}\]
 and\[
Y_{k}'=\begin{cases}
[X_{0},R_{j}^{*}] & \mbox{if }k=1,\\
{}[Y_{k-1},R_{j}^{*}] & \mbox{if }2\leq k\leq b+1.\end{cases}\]
 It is clear that each $X_{k}'$ is a $k$-raising operator, and that
each $Y_{k}'$ is a $k$-lowering operator. Hence Equation (\ref{eqn:3.81})
provides the (unique) Fourier-type decomposition for $[T,R_{j}^{*}]$,
as in Proposition \ref{prop:fourier-decomp}. Since it is given that $[T,R_{j}^{*}]\in\cS$,
Proposition \ref{prop:fourier-decomp} implies that each $X_{k}'\in\cS$
and each $Y_{k}'\in\cS$. This in turn implies that $[X_{k},R_{j}^{*}]\in\cS$
for every $0\leq k\leq b$, and that $[Y_{k},R_{j}^{*}]\in\cS$ for
every $1\leq k\leq b$.

Now let us unfix the index $j$ from the preceding paragraph. For
every $0\leq k\leq b$, we have proved that $[X_{k},R_{j}^{*}]\in\cS$
for all $1\leq j\leq d$, hence Proposition \ref{prop:inclusion-criterion-for-raising-lowering}
implies that $X_{k}\in\cC$. The fact that $Y_{k}\in\cC$ for every
$1\leq k\leq b$ is obtained in the same way. This concludes the proof
in the case when the hypothesis on $T$ is that $[T,R_{j}^{*}]\in\cS$
for all $1\leq j\leq d$.

If $T$ satisfies $[T,R_{j}]\in\cS$ for all $1\leq j\leq d$, then
since the ideal $\cS$ is closed under taking adjoints, it follows
that $[T^{*},R_{j}^{*}]\in\cS$ for all $1\leq j\leq d$. The above
arguments therefore apply to $T^{*}$, and lead to the conclusion
that $T^{*}\in\cC$, which gives $T\in\cC$. 
\end{proof}

\section{\label{sec:main-result}Construction of the embedding}

In this section we fix a deformation parameter $q\in(-1,1)$ and consider
the $C^{*}$-algebra $\cCq=C^{*}(\Lq_{1},\ldots,\Lq_{d})\subseteq B(\cFq)$
from Equation (\ref{eqn:deformed-cuntz}). The main result of this
section (and also this paper), Theorem \ref{thm:main-inclusion},
shows that it is possible to unitarily embed $\cCq$ into the $\mathrm{C}^{*}$-algebra
$\cC=\mathrm{C}^{*}(L_{1},\ldots,L_{d})\subseteq B(\cF)$ from Equation
\ref{eqn:non-deformed-cuntz}.

We will once again utilize the terminology of Subsection \ref{sub:sbl-operators}
with respect to the natural decomposition $\cF=\oplus_{n=0}^{\infty}\cF_{n}$.
In particular, we will refer to the unital $*$-algebra $\cB\subseteq B(\cF)$
consisting of band-limited operators, and to the ideal $\cS$ of $\cB$
consisting of summable band-limited operators.

The deformed Fock space $\cFq$ also has a natural decomposition $\cFq=\oplus_{n=0}^{\infty}\cFq_{n}$,
and we will also need to utilize the terminology of Subsection \ref{sub:sbl-operators}
with respect to this decomposition. We will let $\cBq\subseteq B(\cFq)$
denote the unital $*$-algebra consisting of band-limited operators,
and we will let $\cSq$ denote the ideal of $\cBq$ which consists
of summable band-limited operators.
\begin{rem}
Recall the positive block-diagonal operator $\Mq=\oplus_{n=0}^{\infty}\Mq_{n}\in\cBq$,
which was reviewed in Subsection \ref{sub:original-unitary}. It was recorded
there that for $n\geq1$, $\Mq_{n}$ is an invertible operator on
$\cFq_{n}$. Moreover, for every $n\geq1$, one has the
upper bound (\ref{eqn:DNspectrum}) for the norm $\|(\Mq_{n})^{-1}\|$,
and this upper bound is independent of $n$.

Therefore, the only obstruction to the operator $\Mq$ being invertible
on $\cFq$ is the fact that $\Mq_{0}=0$. We can overcome this obstruction
by working instead with the operator $\Mqplus$ defined by \begin{equation}
\Mqplus:=\Pq_{0}+\Mq,\label{eq:def-Mqplus}\end{equation}
 where $\Pq_{0}\in B(\cFq)$ is the orthogonal projection onto the
subspace $\cFq_{0}$. It's clear that $\Mqplus$ is invertible, and
that the bound from (\ref{eqn:DNspectrum}) applies to $\|(\Mqplus)^{-1}\|$. \end{rem}
\begin{lem}
\label{lem:comm-invsqrtMq-Rqj-est}The operator $\Mqplus$ satisfies
$[(\Mqplus)^{-1/2},\Rq_{j}]\in\cSq$ for all $1\leq j\leq d$. \end{lem}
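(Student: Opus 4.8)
The plan is to reduce the assertion about the inverse square root $(\Mqplus)^{-1/2}$ to the corresponding assertion about the square root $(\Mqplus)^{1/2}$, which is precisely the kind of object handled by Proposition \ref{prop:comm-using-pedersen}, and then to transfer the conclusion back using the ideal property of $\cSq$. The bridge is the purely algebraic identity, valid for any invertible $A$ and any operator $V$,
\[
[A^{-1},V]=-A^{-1}[A,V]A^{-1},
\]
applied with $A=(\Mqplus)^{1/2}$. Since $\Mqplus$ is positive, block-diagonal, and bounded below (its inverse is uniformly bounded by (\ref{eqn:DNspectrum})), both $(\Mqplus)^{1/2}$ and $(\Mqplus)^{-1/2}$ are bounded block-diagonal operators, hence lie in $\cBq$. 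Thus, once I know $[(\Mqplus)^{1/2},\Rq_j]\in\cSq$, the identity together with the fact that $\cSq$ is a two-sided ideal of $\cBq$ will immediately give $[(\Mqplus)^{-1/2},\Rq_j]\in\cSq$.

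It therefore remains to verify the hypothesis of Proposition \ref{prop:comm-using-pedersen} with $T=\Mqplus$ (positive block-diagonal) and $V=\Rq_j$ ($1$-raising); that is, I must check that $\sum_{n\geq0}\|[\Mqplus,\Rq_j]\mid_{\cFq_n}\|^{1/2}<\infty$. First I would compute the commutator explicitly. Writing $\Mqplus=\Pq_0+\Mq$ with $\Mq=\sum_i\Lq_i(\Lq_i)^*$, and using both that the left creation operators commute with $\Rq_j$ and the commutation relation (\ref{eqn:comm-relns}), one finds
\[
[\Mq,\Rq_j]=\sum_{i=1}^d\Lq_i[(\Lq_i)^*,\Rq_j]=\Lq_j D,\qquad D:=\bigoplus_{n\geq0}q^nI_{\cFq_n},
\]
while the remaining term $[\Pq_0,\Rq_j]=\Rq_j\Pq_0$ is a rank-one operator supported on $\cFq_0$ alone.

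Combining these, the restriction of $[\Mqplus,\Rq_j]$ to $\cFq_n$ has norm at most $|q|^n\|\Lq_j\|$ for every $n\geq1$, together with a single finite contribution at $n=0$ coming from $\Rq_j\Pq_0$ and $\Lq_j D\mid_{\cFq_0}$. Since $|q|<1$, the series $\sum_{n\geq0}(|q|^n\|\Lq_j\|)^{1/2}$ converges geometrically, so the square-root-summability hypothesis (\ref{eqn:sqrt-summable}) holds. Proposition \ref{prop:comm-using-pedersen} then yields $[(\Mqplus)^{1/2},\Rq_j]\in\cSq$, and the reduction of the first paragraph completes the argument. The only genuinely delicate point is the explicit commutator computation: one must correctly track that only the left-annihilation/right-creation commutator (\ref{eqn:comm-relns}) survives and that it contributes the decaying diagonal factor $q^n$. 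Everything else reduces to routine geometric-series estimates, with the uniform bound (\ref{eqn:DNspectrum}) ensuring that conjugation by $(\Mqplus)^{-1/2}$ remains inside the bounded algebra $\cBq$.
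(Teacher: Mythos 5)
Your proof is correct and follows essentially the same route as the paper's: the same explicit computation of $[\Mqplus,\Rq_j]$ via Equation (\ref{eqn:comm-relns}), the same application of Proposition \ref{prop:comm-using-pedersen} to get $[(\Mqplus)^{1/2},\Rq_j]\in\cSq$, and the same identity $[A^{-1},V]=-A^{-1}[A,V]A^{-1}$ combined with the ideal property of $\cSq$ to pass to the inverse square root. (Minor slip: $[\Pq_0,\Rq_j]=-\Rq_j\Pq_0$, not $+\Rq_j\Pq_0$, but since this term vanishes on $\cFq_n$ for $n\geq1$ and only norms matter, nothing is affected.)
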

\begin{proof}
First, we will show that $\Mqplus$ and $\Rq$ satisfy the hypotheses
of Proposition \ref{prop:comm-using-pedersen}. It's clear that $\Mqplus$
is block-diagonal and that $\Rq$ is $1$-raising, but it will require
a bit of work to check that \begin{equation}
\sum_{n=0}^{\infty}\|[\Mqplus,\Rq_{j}]\mid_{\cFq_{n}}\|^{1/2}<\infty,\quad\forall1\leq j\leq d.\label{eq:comm-Mq-Rj-est}\end{equation}

In order to show that (\ref{eq:comm-Mq-Rj-est}) holds, fix $1\leq j\leq d$.
Using Equation (\ref{eq:def-Mqplus}), which defines $\Mqplus$, we
can write \begin{eqnarray*}
[\Mqplus,\Rq_{j}] & = & [\Pq_{0},\Rq]+\sum_{i=1}^{d}[\Lq_{i}(\Lq_{i})^{*},\Rq_{j}]\\
 & = & [\Pq_{0},\Rq]+\sum_{i=1}^{d}\Lq_{i}[(\Lq_{i})^{*},\Rq_{j}],\end{eqnarray*}
 where the last equality follows from the fact that $\Lq_{i}$ and
$\Rq_{j}$ commute. The sum in this equation has only a single non-zero
term. Indeed, as a consequence of Equation (\ref{eqn:comm-relns}),
we have $[(\Lq_{i})^{*},\Rq_{j}]=0$ whenever $i\neq j$. Thus we
arrive at the following formula: \begin{equation}
[\Mqplus,\Rq_{j}]=[\Pq_{0},\Rq]+\Lq_{j}[(\Lq_{j})^{*},\Rq_{j}].\label{eqn:4.20}\end{equation}
 We next restrict the operators on both sides of (\ref{eqn:4.20})
to a subspace $\cFq_{n}$, for $n\geq1$. Noting that $[\Pq_{0},\Rq_{j}]=-\Rq_{j}\Pq_{0}$
vanishes on $\cFq_{n}$, we obtain that \begin{equation}
[\Mqplus,\Rq_{j}]\mid_{\cFq_{n}}=\Lq_{j}[(\Lq_{j})^{*},\Rq_{j}]\mid_{\cFq_{n}},\quad\forall n\geq1.\label{eqn:4.21}\end{equation}
 Finally, we take norms in Equation (\ref{eqn:4.21}) and invoke Equation
(\ref{eqn:comm-relns}) once more to obtain that \[
\|[\Mqplus,\Rq_{j}]\mid_{\cFq_{n}}\|\leq|q|^{n}\,\|\Lq_{j}\|,\quad\forall n\geq1.\]
 The conclusion that (\ref{eq:comm-Mq-Rj-est}) holds follows from
here, since $\sum_{n=1}^{\infty}|q|^{n/2}<\infty$.

Therefore, we can apply Proposition \ref{prop:comm-using-pedersen}
to $\Mqplus$ and $\Rq_{j}$, and conclude that $[(\Mqplus)^{1/2},\Rq_{j}]\in\cSq$.
Note that the operator $(\Mqplus)^{-1/2}$ is bounded and block-diagonal,
meaning in particular that it belongs to the $*$-algebra $\cBq$.
The desired result now follows from the obvious identity \[
[(\Mqplus)^{-1/2},\Rq_{j}]=-(\Mqplus)^{-1/2}[(\Mqplus)^{1/2},\Rq_{j}](\Mqplus)^{-1/2},\]
 and the fact that $\cSq$ is a two-sided ideal of $\cBq$. \end{proof}
\begin{lem}
\label{lem:U-satisfies} For $1\leq j\leq d$, the unitary $U=\oplus_{n=0}^{\infty}U_{n}$
from Subsection \ref{sub:original-unitary} satisfies \begin{equation}
U_{n-1}^{*}L_{j}^{*}U_{n}=(\Lq_{j})^{*}(\Mq_{n})^{-1/2},\quad\forall n\geq1.\label{eqn:lem-Un-satisfies}\end{equation}
 (Note that on the left-hand side of Equation (\ref{eqn:lem-Un-satisfies}),
we view $L_{j}^{*}$ as an operator in $B(\cF_{n},\cF_{n-1})$. On
the right-hand side of Equation (\ref{eqn:lem-Un-satisfies}), we
view $(\Lq_{j})^{*}$ as an operator in $B(\cFq_{n},\cFq_{n-1})$.) \end{lem}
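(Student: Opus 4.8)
The plan is to reduce the stated operator identity to a purely algebraic identity between $\Mq_n$ and the annihilation operator $(\Lq_j)^*$, exploiting that $\Mq_n$ is invertible for $n\geq 1$. Since $(\Mq_n)^{1/2}$ is then invertible, the desired equation \eqref{eqn:lem-Un-satisfies} is equivalent to the equation obtained by right-multiplying both sides by $(\Mq_n)^{1/2}$, namely $U_{n-1}^*L_j^*U_n(\Mq_n)^{1/2}=(\Lq_j)^*$. The advantage of this reformulation is that it eliminates the inverse square root entirely: combining it with the defining recursion $U_n=(I\otimes U_{n-1})(\Mq_n)^{1/2}$ gives $U_n(\Mq_n)^{1/2}=(I\otimes U_{n-1})\Mq_n$, so the left-hand side becomes $U_{n-1}^*L_j^*(I\otimes U_{n-1})\Mq_n$.

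The next step is to compute the ``partial'' operator $L_j^*(I\otimes U_{n-1})$. Regard a vector $\zeta\in\cFq_n$, via its underlying element of $(\bC^d)^{\otimes n}$, as an element $\sum_i \xi_i\otimes\zeta_i$ of $\bC^d\otimes\cFq_{n-1}$ (this grouping by the first tensor leg is a purely linear-algebraic operation, independent of which inner product we use). Then $(I\otimes U_{n-1})\zeta=\sum_i \xi_i\otimes U_{n-1}\zeta_i$ lies in $\cF_n=\bC^d\otimes\cF_{n-1}$, and applying $L_j^*$ simply extracts the $j$-th leg, giving $U_{n-1}\zeta_j$. Introducing the leg-extraction map $E_j:\bC^d\otimes\cFq_{n-1}\to\cFq_{n-1}$ defined by $E_j(\sum_i \xi_i\otimes\zeta_i)=\zeta_j$, this reads $L_j^*(I\otimes U_{n-1})=U_{n-1}E_j$. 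Substituting and using $U_{n-1}^*U_{n-1}=I_{\cFq_{n-1}}$ collapses the left-hand side to $E_j\Mq_n$, so the whole lemma reduces to the single identity $E_j\Mq_n=(\Lq_j)^*$ as maps $\cFq_n\to\cFq_{n-1}$.

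Finally, this last identity is verified directly on the natural basis by comparing the explicit formulas already recorded in the excerpt. Reading \eqref{eqn:how-Mq-acts}, the $m$-th summand of $\Mq_n(\xi_{i_1}\otimes\cdots\otimes\xi_{i_n})$ has first tensor leg $\xi_{i_m}$, so $E_j$ retains precisely the terms with $i_m=j$ and strips off that leg, producing $\sum_{m=1}^n q^{m-1}\delta_{i_m,j}\,\xi_{i_1}\otimes\cdots\otimes\widehat{\xi_{i_m}}\otimes\cdots\otimes\xi_{i_n}$, which is exactly $(\Lq_j)^*(\xi_{i_1}\otimes\cdots\otimes\xi_{i_n})$ as given by \eqref{eqn:annihilation}. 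I expect the main point requiring care to be conceptual rather than computational: the underlying vector space $(\bC^d)^{\otimes n}$ simultaneously carries two different Hilbert-space structures (the tensor inner product used to form $\bC^d\otimes\cFq_{n-1}$ and $I\otimes U_{n-1}$, and the $q$-deformed inner product with respect to which $(\Lq_j)^*$ is an adjoint), and one must keep straight that $E_j$ is defined at the level of underlying vectors while $(\Lq_j)^*$ encodes the weights $q^{m-1}$. It is precisely $\Mq_n$ that reconciles these two viewpoints, which is why clearing $(\Mq_n)^{1/2}$ --- legitimate only because of the invertibility and uniform bound recorded in \eqref{eqn:DNspectrum} --- is the decisive move.
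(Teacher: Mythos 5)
Your proof is correct and is essentially the paper's own argument: your leg-extraction map $E_j$ is exactly the operator $A^{(q)}_j$ introduced in the paper's proof, the identity $E_j M^{(q)}_n=(L^{(q)}_j)^*$ verified on the natural basis is the paper's key claim $A^{(q)}_jM^{(q)}_n=(L^{(q)}_j)^*\mid_{\mathcal{F}^{(q)}_n}$, and the use of the recursion $U_n=(I\otimes U_{n-1})(M^{(q)}_n)^{1/2}$ together with $L_j^*(I\otimes U_{n-1})=U_{n-1}E_j$ is identical. The only (cosmetic) difference is that you clear the factor $(M^{(q)}_n)^{1/2}$ at the outset, whereas the paper reinstates it at the end via $A^{(q)}_j=(L^{(q)}_j)^*(M^{(q)}_n)^{-1}$.
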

\begin{proof}
Consider the operator $\Aq_{j}:\cFq_{n}\to\cFq_{n-1}$ which acts
on the natural basis of $\cFq_{n}$ by \[
\Aq_{j}(\xi_{i_{1}}\otimes\cdots\otimes\xi_{i_{n}})=\delta_{j,i_{1}}\xi_{i_{2}}\otimes\cdots\otimes\xi_{i_{n}},\quad\forall1\leq i_{1},\ldots,i_{n}\leq d.\]
 We claim that $\Aq_{j}$ satisfies \begin{equation}
\Aq_{j}=(\Lq_{j})^{*}\,(\Mq_{n})^{-1}\label{eq:temp-op-Aj-sat}\end{equation}
 To see this, note that for $1\leq i_{1},\ldots,i_{n}\leq d$, \begin{eqnarray*}
\Aq_{j}\Mq_{n}(\xi_{i_{1}}\otimes\cdots\otimes\xi_{i_{n}}) & = & \Aq_{j}\sum_{m=1}^{n}q^{m-1}\xi_{i_{m}}\otimes\xi_{i_{1}}\otimes\cdots\otimes\widehat{\xi_{i_{m}}}\otimes\cdots\otimes\xi_{i_{n}}\\
 & = & \sum_{m=1}^{n-1}q^{m-1}\delta_{j,i_{m}}\xi_{i_{1}}\otimes\cdots\otimes\widehat{\xi_{i_{m}}}\otimes\cdots\otimes\xi_{i_{n}}\\
 & = & (\Lq_{j})^{*}(\xi_{i_{1}}\otimes\cdots\otimes\xi_{i_{n}}),\end{eqnarray*}
 where the first and last equalities follow from Equation (\ref{eqn:how-Mq-acts})
and Equation (\ref{eqn:annihilation}) respectively. Hence $\Aq_{j}\Mq_{n}=(\Lq_{j})^{*}\mid_{\cFq_{n}}$,
so multiplying on the right by $(\Mq_{n})^{-1}$ establishes the claim.

Now, from Equation (\ref{eqn:def-Un}), which defines $U_{n}$, we
see that \[
U_{n-1}^{*}L_{j}^{*}U_{n}=U_{n-1}^{*}L_{j}^{*}(I\otimes U_{n-1})(\Mq_{n})^{1/2},\]
 and from the definition of $\Aq_{j}$ it's immediate that \[
L_{j}^{*}(I\otimes U_{n-1})=U_{n-1}\Aq_{j}.\]
 Together, this allows us to write \begin{eqnarray*}
U_{n-1}^{*}L_{j}^{*}U_{n} & = & U_{n-1}^{*}U_{n-1}\Aq_{j}(\Mq_{n})^{1/2}\\
 & = & \Aq_{j}(\Mq_{n})^{1/2}.\end{eqnarray*}
 Applying Equation (\ref{eq:temp-op-Aj-sat}) now gives Equation (\ref{eqn:lem-Un-satisfies}),
as required.\end{proof}
\begin{prop}
\label{prop:comm-ULUR} For $1\leq i,j\leq d$, the unitary $U$ from
Subsection \ref{sub:original-unitary} satisfies $[U^{*}L_{j}^{*}U,\Rq_{i}]\in\cSq$.\end{prop}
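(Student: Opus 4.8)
The plan is to reduce everything to the two preceding lemmas together with the Leibniz rule for commutators. First I would use Lemma~\ref{lem:U-satisfies} to produce a single clean formula for the operator $T := U^{*}L_{j}^{*}U \in B(\cFq)$. Since $U = \oplus_{n}U_{n}$ preserves the grading and $L_{j}^{*}$ is $1$-lowering on $\cF$, the operator $T$ is $1$-lowering on $\cFq$, and on $\cFq_{n}$ with $n \ge 1$ it is given by $T\mid_{\cFq_{n}} = U_{n-1}^{*}L_{j}^{*}U_{n} = (\Lq_{j})^{*}(\Mq_{n})^{-1/2}$. Because $\Mqplus$ restricts to $\Mq_{n}$ on each $\cFq_{n}$ with $n \ge 1$, while both $(\Lq_{j})^{*}$ and $L_{j}^{*}U$ annihilate the vacuum, I would upgrade this to the global operator identity
\[
U^{*}L_{j}^{*}U = (\Lq_{j})^{*}(\Mqplus)^{-1/2}
\]
on all of $\cFq$, the level-$n=0$ case being handled by the fact that both sides vanish on $\Omega$.

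Next I would expand the commutator with the product rule $[AB,C] = A[B,C] + [A,C]B$, taking $A = (\Lq_{j})^{*}$, $B = (\Mqplus)^{-1/2}$ and $C = \Rq_{i}$, to obtain
\[
[U^{*}L_{j}^{*}U, \Rq_{i}] = (\Lq_{j})^{*}\,[(\Mqplus)^{-1/2}, \Rq_{i}] + [(\Lq_{j})^{*}, \Rq_{i}]\,(\Mqplus)^{-1/2}.
\]
I would then show that each summand lies in $\cSq$, in both cases invoking the fact that $\cSq$ is a two-sided ideal of $\cBq$. For the first summand, Lemma~\ref{lem:comm-invsqrtMq-Rqj-est} gives $[(\Mqplus)^{-1/2}, \Rq_{i}] \in \cSq$, and since $(\Lq_{j})^{*}$ is a bounded $1$-lowering operator, hence a member of $\cBq$, the ideal property yields $(\Lq_{j})^{*}[(\Mqplus)^{-1/2}, \Rq_{i}] \in \cSq$.

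For the second summand I would use the $q$-commutation relation~(\ref{eqn:comm-relns}), which shows that $[(\Lq_{j})^{*}, \Rq_{i}]$ is block-diagonal with $[(\Lq_{j})^{*}, \Rq_{i}]\mid_{\cFq_{n}} = \delta_{ij}q^{n}I_{\cFq_{n}}$. The geometric decay $\sum_{n}|q|^{n} < \infty$ then places $[(\Lq_{j})^{*}, \Rq_{i}]$ directly into $\cSq$, and multiplying on the right by the bounded block-diagonal operator $(\Mqplus)^{-1/2} \in \cBq$ keeps the product in $\cSq$. Summing the two contributions gives the claim. I do not anticipate a genuine obstacle: the whole argument hinges on the identity $U^{*}L_{j}^{*}U = (\Lq_{j})^{*}(\Mqplus)^{-1/2}$, after which the result falls out of the ideal structure of $\cSq$ and the exponential decay supplied by~(\ref{eqn:comm-relns}). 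The only delicate bookkeeping is at level $n=0$, where $\Mq_{0}=0$ forces the replacement of $\Mq$ by the invertible $\Mqplus$; but this is precisely why $\Mqplus$ was introduced, so the passage is routine.
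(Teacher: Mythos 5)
Your proposal is correct and follows essentially the same route as the paper: both arrive at the decomposition $[U^{*}L_{j}^{*}U,\Rq_{i}]=(\Lq_{j})^{*}[(\Mqplus)^{-1/2},\Rq_{i}]+[(\Lq_{j})^{*},\Rq_{i}](\Mqplus)^{-1/2}$ and then invoke Lemma~\ref{lem:comm-invsqrtMq-Rqj-est}, the geometric decay from Equation~(\ref{eqn:comm-relns}), and the two-sided ideal property of $\cSq$. The only cosmetic difference is that you first upgrade Lemma~\ref{lem:U-satisfies} to the global identity $U^{*}L_{j}^{*}U=(\Lq_{j})^{*}(\Mqplus)^{-1/2}$ and apply the Leibniz rule, whereas the paper performs the equivalent computation blockwise on each $\cFq_{n}$.
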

\begin{proof}
Fix $i$ and $j$ and let $C$ denote the commutator $C=[U^{*}L_{j}^{*}U,\Rq_{i}]$.
It's clear that $C$ is a block-diagonal operator on $\cFq$. In order
to show that $C\in\cSq$, we will need to estimate the norm of its
diagonal blocks.

For $n\geq1$, Lemma \ref{lem:U-satisfies} gives\begin{eqnarray*}
C\mid_{\cFq_{n}} & = & U_{n}^{*}L_{j}^{*}U_{n+1}\Rq_{i}-\Rq_{i}U_{n-1}^{*}L_{j}^{*}U_{n}\\
 & = & (\Lq_{j})^{*}(\Mq_{n+1})^{-1/2}\Rq_{i}-\Rq_{i}(\Lq_{j})^{*}(\Mq_{n})^{-1/2}\\
 & = & (\Lq_{j})^{*}(((\Mq_{n+1})^{-1/2}\Rq_{i}-\Rq_{i}(\Mq_{n})^{-1/2})\\
 &  & +((\Lq_{j})^{*}\Rq_{i}-\Rq_{i}(\Lq_{j})^{*})(\Mq_{n})^{-1/2}.\end{eqnarray*}
 Since $C$ is block-diagonal, this gives\[
C=(\Lq_{j})^{*}[(\Mqplus)^{-1/2},\Rq_{i}]+[(\Lq_{j})^{*},\Rq_{i}](\Mqplus)^{-1/2}.\]
 Now, $[(\Mqplus)^{-1/2},\Rq_{i}]\in\cSq$ by Lemma \ref{lem:comm-invsqrtMq-Rqj-est}.
By Equation (\ref{eqn:comm-relns}),\[
[(\Lq_{j})^{*},\Rq_{i}]\mid_{\cFq_{n}}=\delta_{ij}q^{n}I_{\cFq_{n}},\]
 and since the operator $[(\Lq_{j})^{*},\Rq_{i}]$ is block-diagonal,
this implies that it also belongs to $\cSq$. Since $(\Lq_{j})^{*}$
and $(\Mqplus)^{-1/2}$ both belong to $\cBq$, and since $\cSq$
is a two-sided ideal of $\cBq$, it follows that $C\in\cSq$. 
\end{proof}
We are now able to complete the proof of the embedding theorem. 
\begin{proof}
[Proof of Theorem 1.3]It suffices to show that $\Uopp\Lq_{i}\Uopp^{*}\in\cC$,
for $1\leq i\leq d$. Since $\Uopp\Lq_{i}\Uopp^{*}$ belongs to the
algebra $\cB$ of all band-limited operators, by Theorem \ref{thm:inclusion-criterion}
it will actually be sufficient to verify that\[
[\Uopp\Lq_{i}\Uopp^{*},R_{j}^{*}]\in\cS,\quad\forall1\leq i,j\leq d.\]
 By Definition \ref{def:conjugation-operator}, we can write \begin{eqnarray*}
\Uopp\Lq_{i}\Uopp^{*} & = & JU\Jq\Lq_{i}\Jq U^{*}J\\
 & = & JU\Rq_{i}U^{*}J,\end{eqnarray*}
 where the last equality follows from Equation (\ref{eqn:J-op-affects-Li}).
This gives\begin{eqnarray*}
[\Uopp\Lq_{i}\Uopp^{*},R_{j}^{*}] & = & [JU\Rq_{i}U^{*}J,R_{j}^{*}]\\
 & = & JU[\Rq_{i},U^{*}JR_{j}^{*}JU]U^{*}J\\
 & = & JU[\Rq_{i},U^{*}L_{j}^{*}U](JU)^{*},\end{eqnarray*}
 and we know from Proposition \ref{prop:comm-ULUR} that $[\Rq_{i},U^{*}L_{j}^{*}U]\in\cSq$.
It is clear that conjugation by the unitary $JU$ takes $\cSq$ onto
$\cS$, so this gives the desired result. 
\end{proof}
The proof that $\cCq$ is exact now follows from some simple observations
about nuclear and exact $\mathrm{C}^{*}$-algebras (see e.g. \cite{BO2008}).
\begin{proof}
[Proof of Corollary 1.4]The extended Cuntz algebra $\cC$ is (isomorphic
to) an extension of the Cuntz algebra. Since the Cuntz algebra is
nuclear, this implies that $\cC$ is nuclear, and in particular that
$\cC$ is exact. Since exactness is inherited by subalgebras (see
e.g. Chapter 2 of \cite{BO2008}), it follows from Theorem \ref{thm:main-inclusion}
that $\Uopp\cCq\Uopp^{*}$ is exact, and hence that $\cCq$ is exact. \end{proof}
\begin{rem}
Since Theorem \ref{thm:main-inclusion} holds for all $q\in(-1,1)$,
a natural thought is that the methods used above could also be applied
to establish the inclusion $U\cCq U^{*}\subseteq\cC$ for all $q\in(-1,1)$,
and hence (since the opposite inclusion was shown in \cite{DN1993})
that $U\cCq U^{*}=\cC.$ To do this, it would be necessary to establish
that \begin{equation}
[U\Lq U^{*},R_{j}^{*}]\in\cS,\quad\forall1\leq i,j\leq d.\label{eq:want-to-est}\end{equation}
 This condition looks superficially similar to the condition from
Proposition \ref{prop:comm-ULUR}, but this is deceptive. We believe
that establishing (\ref{eq:want-to-est}) will require a deeper understanding
of the combinatorics which underlie the $q$-commutation relations.

The algebra $\cCq$ arises as a representation of the the univeral
algebra $\cEq$ corresponding to the $q$-commutation relations. It
was shown in \cite{JSW1993} that for $|q|<\sqrt{2}-1$, $\cCq$ and
$\cEq$ are isomorphic (and in particular that they are both isomorphic
to the extended Cuntz algebra). It is believed that this is the case
for all $q\in(-1,1)$. 
\end{rem}

\section{An application to the $q$-Gaussian von Neumann algebras}

The $q$-Gaussian von Neumann algebra $\cMq$ is the von Neumann algebra
generated by $\{\Lq_{i}+(\Lq_{i})^{*}\mid1\leq i\leq d\}$. This algebra
can be considered as a type of deformation of $L(\mathbb{F}_{d})$,
the von Neumann algebra of the free group on $d$ generators. Indeed,
for $q=0$, a basic result in free probability states that $\cMq$
is precisely the realization of $L(\mathbb{F}_{d})$ as the von Neumann
algebra generated by a free semicircular family (see e.g. Section
2.6 of \cite{VDN1992} for the details).

For general $q\in(-1,1)$ it is known that $\cMq$ is a von Neumann
algebra in standard form, with $\Omega$ being a cyclic and separating
trace-vector. The commutant of $\cMq$ is the von Neumann algebra
generated by $\{\Rq_{i}+(\Rq_{i})^{*}\mid1\leq i\leq d\}$ (see Section
2 of \cite{BKS1997}).

Not much is known about the isomorphism class of the algebras $\cMq$
for $q\neq0$. The major open problem is to determine the extent to
which they behave like $L(\mathbb{F}_{d})$. The best results to date
show that $\cMq$ does share certain properties with $L(\mathbb{F}_{d})$.
Nou showed in \cite{N2004} that $\cMq$ is non-injective, and Ricard
showed in \cite{R2005} that it is a $II_{1}$ factor. Shlyakhtenko
showed in \cite{S2004} that if we assume $|q|<0.44$, then the results
in \cite{JSW1993} and \cite{DN1993} can be used to obtain that $\cMq$
is solid in the sense of Ozawa.

Based on the results in Section \ref{sec:main-result}, we show here
that $\cMq$ is weakly exact. For more details on weak exactness,
we refer the reader to Chapter 14 of \cite{BO2008}.
\begin{thm}
\label{thm:weakly-exact}For every $q$ in the interval $(-1,1)$,
the $q$-Gaussian von Neumann algebra $\cMq$ is weakly exact. \end{thm}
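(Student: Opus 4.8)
The plan is to deduce weak exactness of $\cMq$ from the exactness of $\cCq$ established in Corollary \ref{cor:Cq-exact}, using the standard machinery relating exactness of a C$^{*}$-algebra to weak exactness of a von Neumann algebra it generates. First I would recall the relevant characterization (as in Chapter 14 of \cite{BO2008}): a von Neumann algebra $\cM\subseteq B(\cH)$ acting in standard form is weakly exact provided there is a weakly dense exact C$^{*}$-subalgebra $\cA\subseteq\cM$ whose representation on $\cH$ is suitably compatible with the commutant. More precisely, the key fact is that if $\cA$ is an exact C$^{*}$-algebra represented on a Hilbert space $\cH$ so that the von Neumann algebra $\cM=\cA''$ it generates has commutant $\cM'$, and the product representation of $\cA\odot\cM'$ is continuous with respect to the minimal (spatial) tensor norm, then $\cM$ is weakly exact.

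The second step is to identify the exact dense subalgebra. The natural candidate is $\cCq=\mathrm{C}^{*}(\Lq_{1},\dots,\Lq_{d})$, which by Corollary \ref{cor:Cq-exact} is exact for every $q\in(-1,1)$. I would observe that since $\Lq_{i}+(\Lq_{i})^{*}\in\cCq$, the generators of $\cMq$ lie in $\cCq$, and therefore $\cCq$ is weakly dense in $\cMq$ (indeed $\cCq''=\cMq$, as $\cMq$ is by definition the von Neumann algebra generated by these self-adjoint elements, and the real and imaginary parts recover $\Lq_{i}$ itself). Thus $\cCq\subseteq\cMq$ is an exact, weakly dense C$^{*}$-subalgebra, sitting concretely inside $B(\cFq)$.

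The third step is to verify the tensor-norm compatibility with the commutant. Here I would use the structural facts recalled in Section 5: $\cMq$ acts in standard form on $\cFq$ with $\Omega$ cyclic and separating, and the commutant $\cMq'$ is generated by $\{\Rq_{i}+(\Rq_{i})^{*}\}$. I would invoke the criterion that it suffices to show the multiplication map $\cCq\odot\cMq'\to B(\cFq)$ extends continuously to the minimal tensor product $\cCq\otimes_{\min}\cMq'$. Because $\cCq$ is exact, $\cCq\otimes_{\min}\cMq'$ coincides with the relevant completion, and the spatial representation of the algebraic tensor product $\cCq\odot\cMq'$ on $\cFq\otimes\cFq$ (followed by the commuting-action map into $B(\cFq)$) is automatically min-continuous; exactness of $\cCq$ is precisely what guarantees this extends. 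Applying the standard theorem then yields that $\cMq=\cCq''$ is weakly exact.

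The main obstacle is entirely bookkeeping rather than analysis: one must cite the correct formulation of weak exactness (it is sensitive to whether one uses Kirchberg's original definition via the commutant or the equivalent operator-space formulation in \cite{BO2008}) and confirm that the hypotheses—standard form, weakly dense exact subalgebra, commuting representation—are met verbatim. I expect no genuine difficulty here, since all the ingredients are already assembled: the nontrivial content is exactness of $\cCq$, which we have, and the passage from exact generating C$^{*}$-algebra to weakly exact von Neumann algebra is a black-box result once the commutant is identified. I would therefore keep this proof short, essentially a citation to \cite{BO2008} combined with Corollary \ref{cor:Cq-exact} and the commutant description from \cite{BKS1997}.
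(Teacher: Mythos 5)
Your overall strategy---reduce to Corollary \ref{cor:Cq-exact} via the criterion that a von Neumann algebra containing a weakly dense exact $\mathrm{C}^{*}$-subalgebra is weakly exact---is the right one and is the route the paper takes. The gap is in your choice of dense subalgebra. You take $\cCq$ itself and assert $\cCq\subseteq\cMq$ with $\cCq''=\cMq$, on the grounds that ``the real and imaginary parts recover $\Lq_{i}$ itself.'' But $\cMq$ is generated only by the real parts $\Lq_{i}+(\Lq_{i})^{*}$; nothing gives you the imaginary parts, hence nothing gives you $\Lq_{i}\in\cMq$, and in fact this is false. Already at $q=0$: $\cC$ is the Cuntz--Toeplitz algebra, which contains the rank-one projection $P_{0}$ onto $\bC\Omega$ and acts irreducibly on $\cF$ (so $\cC''=B(\cF)$), whereas $\mathcal{M}^{(0)}\cong L(\mathbb{F}_{d})$ is a $\mathrm{II}_{1}$ factor with no nonzero finite-rank projections. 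The same failure persists for every $q$: since $0$ is an isolated point of the spectrum of $\Mq$ by (\ref{eqn:DNspectrum}), the projection $\Pq_{0}$ lies in $\cCq$, while $\cMq$ is a $\mathrm{II}_{1}$ factor by \cite{R2005}. So $\cCq\not\subseteq\cMq$, and your second step collapses; your third step, besides being unnecessary for the formulation of the criterion cited in the paper (Theorem 14.1.2 of \cite{BO2008}), would in any case have to be run with a subalgebra that actually sits inside $\cMq$.

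The repair is small and is exactly what the paper does: take $\cAq:=\mathrm{C}^{*}(\{\Lq_{i}+(\Lq_{i})^{*}\mid1\leq i\leq d\})$. This is contained in $\cMq$ and weakly dense in it by the very definition of $\cMq$, and it is contained in $\cCq$, so exactness of $\cAq$ follows from Corollary \ref{cor:Cq-exact} together with the fact that exactness passes to $\mathrm{C}^{*}$-subalgebras. The cited criterion then yields that $\cMq$ is weakly exact.
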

\begin{proof}
It is known that a von Neumann algebra is weakly exact if it contains
a weakly dense $\mathrm{C}^{*}$-algebra which is exact (see e.g.
Theorem 14.1.2 of \cite{BO2008}). Consider the $\mathrm{C}^{*}$-algebra
$\cAq$ generated by $\{\Lq_{i}+(\Lq_{i})^{*}\mid1\leq i\leq d\}$.
It is clear that $\cAq$ is weakly dense in $\cMq$, while on the
other hand, we have $\cAq\subseteq\cCq$. Therefore, the exactness
of $\cAq$ follows from Corollary \ref{cor:Cq-exact}, combined with
the fact that exactness is inherited by subalgebras.\end{proof}

\end{document}